\documentclass[a4paper,12pt,leqno]{amsart}
\usepackage[colorlinks=true,linkcolor=darkblue,citecolor=darkblue]{hyperref}

\usepackage{graphicx}
\usepackage{pinlabel} 
\usepackage{amsmath}
\usepackage{amsfonts}
\usepackage{amssymb}
\usepackage{mathtools}
\usepackage[all,cmtip]{xy}
\usepackage{amsthm}
\usepackage{tikz-cd}
\usepackage{comment}
\usepackage{enumerate}
\usepackage{url}
\usepackage{epsfig}
\usepackage[utf8]{inputenc}

\usepackage[capitalise]{cleveref}
\usepackage{geometry}
 \geometry{
 a4paper,
 total={170mm,257mm},
 left=20mm,
 top=20mm,
 }


\makeatletter
\providecommand\@dotsep{5}
\def\listtodoname{List of Todos}
\def\listoftodos{\@starttoc{tdo}\listtodoname}
\makeatother


\newtheorem{theorem}{Theorem}[section]
\newtheorem{proposition}[theorem]{Proposition}
\newtheorem{corollary}[theorem]{Corollary}
\newtheorem{lemma}[theorem]{Lemma}

\newcommand{\mycomment}[1]{}

  \theoremstyle{definition}
\newtheorem{definition}[theorem]{Definition}

\newtheorem{remark}[theorem]{Remark}



\newcommand{\Z}{\mathbb{Z}}




\newcommand{\nbeq}{\begin{equation}}
\newcommand{\neeq}{\end{equation}}
\newcommand{\beq}{\begin{equation*}}
\newcommand{\eeq}{\end{equation*}}


\DeclareMathOperator{\cd}{cd}
\DeclareMathOperator{\asdim}{asdim}

\DeclareMathOperator{\vcd}{vcd}

\DeclareMathOperator{\asd}{asdim}

\DeclareMathOperator{\gd}{gd}


\DeclareMathOperator{\mcg}{Mod}



\DeclareMathOperator{\pmcg}{PMod}

\DeclareMathOperator{\fps}{FP}
\DeclareMathOperator{\pb}{PB}




\definecolor{darkblue}{rgb}{0.0, 0.0, 0.55}
\usepackage{verbatim}


\begin{document}

\title[]{Asymptotic and cohomological dimension of surface braid groups and poly-surface groups}

\author[Porfirio L. León Álvarez]{Porfirio L. León Álvarez}

\address{Instituto de Matemáticas, Universidad Nacional Autónoma de México. Oaxaca de Juárez, Oaxaca, México 68000}
\email{porfirio.leon@im.unam.mx}

\author[Israel Morales]{Israel Morales}

\address{Departamento de Matemática y Estadística. 
Universidad de La Frontera. Avenida Francisco Salazar 01145, Temuco, Chile.}
\email{israel.morales@ufrontera.cl}


\date{\today}


\keywords{Asymptotic dimension, surface braid groups, poly-free groups, duality groups, virtual cohomological dimension}

\subjclass{20F69, 20F65, 57M07}

\begin{abstract}
In this paper, we determine the asymptotic dimension for all surface braid groups —including those associated with non-orientable and infinite-type surfaces— as well as for torsion-free poly-finitely generated surface groups. We demonstrate that for both classes, the virtual cohomological dimension and the asymptotic dimension coincide. For poly-finitely generated surface groups and braid groups of finite-type surfaces, our approach establishes that these groups are virtual duality groups in the sense of Bieri-Eckmann. In the case of infinite-type surfaces, the argument is based on the fact that their braid groups are countable and normally poly-free.

\end{abstract}

\maketitle
\section{Introduction}
In \cite{Gr93}, M. Gromov introduced the notion of \emph{asymptotic dimension} ($\mathrm{asdim}$) for groups and, more broadly, for metric spaces, as a quasi-isometric invariant (see \cref{asdim:def}). Beyond its intrinsic importance in coarse geometry, asymptotic dimension maintains deep connections with several fundamental conjectures in topology and geometry. Notably, Yu established \cite{YU} the coarse Baum-Connes conjecture for groups with finite asymptotic dimension. Furthermore, the asymptotic dimension provides an upper bound for the nuclear dimension of $C^*$-algebras \cite[Theorem 8.5]{WINTER2010461} and it is closely linked to the concept of dynamical asymptotic dimension \cite[Theorem 1.3]{MR3606454}.

While upper bounds for asymptotic dimension are known for several classes of groups—such as hyperbolic groups  \cite{Gr93} and mapping class groups \cite{Bestvina}—explicit computations remain, in general, a difficult challenge. To date, precise values have been determined for poly-\(\mathbb{Z}\) groups \cite[Theorem 3.5]{A2006}, for right-angled Artin groups (RAAGs) \cite[Theorem 1.2]{MR4665278}, and recently, for \( 3 \)-manifold groups \cite{Haydee}. 

Although it is established that braid groups of finite-type surfaces have finite asymptotic dimension, their exact values are known only in sporadic cases. In this paper, we prove that the asymptotic dimension of surface braid groups coincides with their virtual cohomological dimension, providing an explicit computation for all such groups.

\begin{theorem}\label{main:them:0}
Let $\Sigma$ be a surface of finite type and $n\geq 1$. The asymptotic dimension of the $n$--strand surface braid group $\mathrm{B}_n(\Sigma)$ coincides with its virtual cohomological dimension; that is,
\[
\asd(\mathrm{B}_n(\Sigma)) = \vcd(\mathrm{B}_n(\Sigma)).
\]

\end{theorem}

The proof of \cref{main:them:0} rests on the fact that the braid group of a finite-type surface 
 is a virtual duality group in the sense of Bieri–Eckmann (see \cref{duality:groups}). Notable examples of such groups include mapping class groups of orientable surfaces \cite{harer1986}, certain arithmetic groups such as  \cite{BorelSerre1973}, and, as recently established, handlebody groups \cite{PetersenWade2024}.
 
Although the $\vcd$ of braid groups over closed surfaces is well-documented (see \cref{Rmk:VCD-BraidGroups}), we offer here a systematic and self-contained computation applicable to the entire class of finite-type surface braid groups. For $g\in \Z$ and $r,k \geq 0$, if $g$ is nonnegative (resp., $g$ is negative), $\Sigma_{g,k}^r$ denotes the surface of genus $\vert g \vert$ with $r$ boundary components and $k$ punctures, which is orientable if $g\geq 0$ and non-orientable if $g<0$.

\begin{theorem}\label{thm:duality:braid:0}
Let $\Sigma = \Sigma_{g,k}^r$ be a surface of finite type and $n \geq 1$. Then the $n$--strand surface braid group $\mathrm{B}_n(\Sigma)$ is a virtual duality group with virtual cohomological dimension given by:
\[
\vcd\bigl(\mathrm{B}_n(\Sigma)\bigr) =
\begin{cases}
    n - 3 & \text{if } g = 0,\ k + r = 0,\ n \geq 4, \\
    n - 2 & \text{if } g = -1,\ k + r = 0,\ n \geq 3, \\
    n + 1 & \text{if } (g = 1 \mbox{ or } |g| \geq 2) \mbox{ and } \ k + r = 0, \\
    n     & \text{if } (g = 0 \mbox{ and } \ k + r \geq 2) \mbox{ or } (|g| \geq 1 \mbox{ and }\ k + r \geq 1),\\
    n - 1 & \text{if } g = 0,\ k + r = 1, \\
    0     & \text{otherwise.}
\end{cases}
\]
Moreover, if $\Sigma$ is neither the sphere ($g=0,k+r=0$) nor the projective plane ($g=-1, k+r=0$), then $\mathrm{B}_n(\Sigma)$ is a duality group.
\end{theorem}

    

\begin{remark}\label{Rmk:VCD-BraidGroups}


The virtual cohomological dimension of braid groups of closed surfaces has been previously established in the literature (see \cite{maldonado:Guaschi:1,MR3869010}). For the closed disk, the $n$-strand braid group coincides with the classical Artin braid group $\mathrm{B}_n$, which is isomorphic to the mapping class groups of the closed disk with $n$ punctures in its interior \cite{MR0375281}. The cohomological dimension in this case is well known due to the work of Harer \cite{harer1986}, see also \cite[Section 3]{Arnold2014}. 
\end{remark}

Recently, there has been a growing interest in the mapping class groups of infinite-type surfaces—frequently termed \emph{``big mapping class groups''} in the literature \cite{Aramayona-Vlamis2020}. Unlike their finite-type counterparts, big mapping class groups are uncountable and fail to be compactly generated. In contrast, braid groups of infinite-type surfaces are torsion-free, countable, and infinitely generated (see \cref{braid:groups:infinite:type:are normally:polifree}). For such non-finitely generated countable groups, Dranishnikov and Smith \cite{A2006} defined the asymptotic dimension as the supremum of the asymptotic dimensions of its finitely generated subgroups, showing that this extension remains a quasi-isometric invariant.

In this paper, we prove that the asymptotic dimension of the $n$--strand braid group of an infinite-type surface is exactly $n$, coinciding with both its \emph{cohomological dimension} ($\cd$) and \emph{geometric dimension} ($\gd$) (see \cref{cd:gd:def}). We note that these groups are not duality groups, as the latter are necessarily finitely generated. Our argument builds on our previous work on finite-type surface braid groups and the observation that the $n$--strand pure braid group of an infinite-type surface is normally poly-free of length $n$ (see \cref{Subsection:Poly-SurfaceGroups} and \cref{braid:groups:infinite:type:are normally:polifree}).

\begin{theorem}\label{asdim:infinite:type}
 Let $\Sigma$ be a surface of infinite type and $n\geq 1$. Then the $n$--strand pure surface braid group $\mathrm{PB}_n(\Sigma)$ satisfies: $$\asd(\pb_n(\Sigma))=\cd(\pb_n(\Sigma))=\mathrm{gd}(\pb_n(\Sigma))=n.$$    
\end{theorem}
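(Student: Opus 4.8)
The plan is to squeeze all three invariants between $n$ and $n$: an upper bound $\le n$ from the fact that $\pb_n(\Sigma)$ is built from free groups by iterated extensions, and a lower bound $\ge n$ from a single finite-type braid subgroup to which Theorem~\ref{main:them:0} applies. I would begin by recording the structure of $G:=\pb_n(\Sigma)$. Since $\Sigma$ has infinitely generated fundamental group it is non-compact, so $\pi_1(\Sigma)$ is a free group of infinite rank, and since $\Sigma\neq\mathbb{S}^2,\mathbb{RP}^2$ every configuration space $F_m(\Sigma)$ is aspherical. The Fadell--Neuwirth fibration $F_m(\Sigma)\to F_{m-1}(\Sigma)$ (forget the last point) has fibre $\Sigma$ with $m-1$ interior punctures, again a non-compact surface, hence with free fundamental group; taking $\pi_1$ gives short exact sequences $1\to K_m\to\pb_m(\Sigma)\to\pb_{m-1}(\Sigma)\to1$ with $K_m$ free. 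Thus $G$ is countable, torsion-free, and poly-free of length $n$, with $\pb_1(\Sigma)=\pi_1(\Sigma)$ a nontrivial free group.

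For the upper bounds I would feed these extensions into subadditivity. From $\cd(G')\le\cd(N)+\cd(Q)$ and $\gd(G')\le\gd(N)+\gd(Q)$ for an extension $1\to N\to G'\to Q\to1$, together with $\cd(K_m)=\gd(K_m)=1$, induction on $m$ gives $\cd(\pb_n(\Sigma))\le\gd(\pb_n(\Sigma))\le n$. For the asymptotic dimension I would argue in one of two ways: either apply the Hurewicz-type extension theorem for asymptotic dimension of countable groups (\cite{A2006}) to the same sequences, using $\asd(K_m)=1$, to get $\asd(\pb_n(\Sigma))\le 1+\asd(\pb_{n-1}(\Sigma))\le n$; or exhaust $\Sigma$ by nested finite-type \emph{incompressible} subsurfaces $\Sigma^{(j)}$, observe (by the injectivity argument below applied to each $\Sigma^{(j)}$ and compactness of loops) that $\pb_n(\Sigma)=\bigcup_j\pb_n(\Sigma^{(j)})$ is an increasing union of subgroups, so that $\asd(\pb_n(\Sigma))=\sup_j\asd(\pb_n(\Sigma^{(j)}))=\sup_j\vcd(\pb_n(\Sigma^{(j)}))\le n$ by Theorem~\ref{main:them:0}, each $\Sigma^{(j)}$ having non-empty boundary so that its braid group has $\vcd\le n$.

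For the lower bound I would use a single incompressible annulus $\Sigma'\subseteq\Sigma$, a regular neighbourhood of a non-contractible simple closed curve, which exists because $\Sigma$ is of infinite type. The key point is that $\pb_n(\Sigma')\hookrightarrow\pb_n(\Sigma)$, which I would prove by induction on the number of strands: placing all configurations in $\mathrm{int}(\Sigma')$ yields a commuting ladder of Fadell--Neuwirth fibrations over $F_\bullet(\Sigma')\to F_\bullet(\Sigma)$ whose fibres are $\Sigma'$ and $\Sigma$ with the same interior punctures removed, incompressibility persists after deleting interior points, all bases are aspherical so each fibration sequence is short exact on $\pi_1$, and a diagram chase with the five lemma propagates injectivity up the ladder from the base case $\Sigma'\hookrightarrow\Sigma$. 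Granting this, $\Sigma'$ is a finite-type surface with non-empty boundary which is not a disk, so Theorem~\ref{main:them:0} (the case $g=0$, $k+r\ge2$) gives $\vcd(\pb_n(\Sigma'))=n$; since $\pb_n(\Sigma')$ is torsion-free this equals $\cd(\pb_n(\Sigma'))$, and by Theorem~\ref{main:them:0} it equals $\asd(\pb_n(\Sigma'))$ as well. Monotonicity of $\cd$ under subgroups and of $\asd$ under subgroups of countable groups then gives $\cd(\pb_n(\Sigma))\ge n$ and $\asd(\pb_n(\Sigma))\ge n$; combined with the upper bounds and $\cd\le\gd$ this forces $\asd(\pb_n(\Sigma))=\cd(\pb_n(\Sigma))=\gd(\pb_n(\Sigma))=n$.

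The step I expect to be the main obstacle is the $\pi_1$-injectivity of the configuration-space inclusion $F_n(\Sigma')\hookrightarrow F_n(\Sigma)$ for an incompressible subsurface $\Sigma'$ --- and, if one takes the exhaustion route to the $\asd$ upper bound, the companion facts that $\Sigma$ admits a nested incompressible exhaustion and that $\pi_1(F_n(\Sigma))$ is the corresponding direct limit. This is where the Fadell--Neuwirth tower is genuinely used, where one must verify that deleting marked points preserves incompressibility, and where one must be careful to choose a subsurface that is not a disk, since the braid group of the disk has virtual cohomological dimension $n-1$ rather than $n$.
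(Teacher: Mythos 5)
Your proposal is correct and follows essentially the same route as the paper: the upper bounds come from the Fadell--Neuwirth tower (poly-free structure of length $n$) together with subadditivity of $\cd$, $\gd$ and $\asd$ under extensions, and the lower bound comes from embedding a finite-type surface braid group via the five-lemma ladder over an incompressible subsurface and invoking the finite-type computation; the paper merely cites the subsurface-injectivity proposition (proved by the very same ladder argument) and takes a subsurface of ``enough topological complexity'' where you take an annulus. The only microscopic point to watch is that in a non-orientable $\Sigma$ the chosen essential simple closed curve should be two-sided so that its regular neighbourhood is an annulus (or else note that the M\"obius-band case $g=-1$, $k+r=1$ also yields $\vcd=n$), which does not affect the argument.
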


\subsection*{Asymptotic dimension of torsion-free poly-surface groups} We provide a formula for the asymptotic dimension of torsion-free, poly-finitely generated (poly-f.g.) surface groups. The core of the argument rests on demonstrating that this class consists of duality groups in the sense of Bieri–Eckmann \cite{Eckmann1973}. We refer the reader to \cref{Thm:GeneralResult} for a broader class of groups where the same conclusion holds.

\begin{theorem}[Asymptotic dimension of poly-f.g.-surface groups]\label{asim:dim:poly:surface:0}
Let \( G \) be a torsion-free poly-finitely generated surface group of length \( n \), with filtration \(
1 = G_0 \lhd G_1 \lhd \cdots \lhd G_{n-1} \lhd G_n = G\). Then $G$ is a duality group of dimension \[\asd(G) = \cd(G)=\sum_{i=0}^{n-1} \asd\left( \frac{G_{i+1}}{G_i} \right).\]
\end{theorem}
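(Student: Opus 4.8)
The plan is to argue by induction on the length $n$ of the filtration, at each stage peeling off the \emph{top} quotient $Q := G/G_{n-1}$. This is legitimate because $G_{n-1}\lhd G_n=G$ and the induced filtration $1=G_0\lhd\cdots\lhd G_{n-1}$ exhibits $N:=G_{n-1}$ as a torsion-free poly-f.g.-surface group of length $\le n-1$, to which the inductive hypothesis applies. For the base case $n\le 1$ the group $G$ is itself a torsion-free finitely generated surface group, hence, by the classification of surfaces and since a finite-type surface group is the trivial group (e.g.\ $\pi_1 S^2$), $\mathbb{Z}/2\cong\pi_1\mathbb{RP}^2$, a free group of finite rank, or the fundamental group of a closed aspherical surface, and the torsion case $\mathbb{Z}/2$ is excluded by torsion-freeness. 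In each remaining case $G$ is a duality group — nontrivial free groups and closed aspherical surface groups are duality groups, the latter even Poincaré duality groups, see \cite{Eckmann1973} — of dimension $d\in\{0,1,2\}$, and one checks directly that $\asd(G)=\cd(G)=d$, since a nontrivial free group is quasi-isometric to a tree and a closed aspherical surface group to $\mathbb{R}^2$ or $\mathbb{H}^2$. In particular, for each building block the summand $\asd(G_{i+1}/G_i)$ equals both its cohomological dimension and its dimension as a duality group.

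For the inductive step set $N:=G_{n-1}$ and $Q:=G/N$; by induction $N$ is a duality group of dimension $m':=\sum_{i=0}^{n-2}\asd(G_{i+1}/G_i)$ with $\asd(N)=\cd(N)=m'$, and $Q$ is a finitely generated surface group. If $Q\cong\pi_1\mathbb{RP}^2\cong\mathbb{Z}/2$ then $N$ has index $2$ in $G$: since $G$ is torsion-free, Serre's theorem gives $\cd(G)=\cd(N)=m'$; $G$ is of type $\mathrm{FP}$ as a finite-index overgroup of the $\mathrm{FP}$-group $N$; and Shapiro's lemma (coinduction equals induction in finite index) gives $H^i(G;\mathbb{Z}G)\cong H^i(N;\mathbb{Z}N)$, so $G$ is a duality group of dimension $m'$; finally $\asd(G)=\asd(N)=m'$ by finite-index invariance, and $m'=m$ because $\asd(\mathbb{Z}/2)=0$. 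Otherwise $Q$ is torsion-free, hence a duality group of dimension $d:=\asd(Q)$ with $\cd(Q)=d$ by the base-case analysis. Since type $\mathrm{FP}$ is closed under extensions, $G$ is of type $\mathrm{FP}$, and by the Bieri–Eckmann extension theorem for duality groups \cite{Eckmann1973} the extension $1\to N\to G\to Q\to 1$ makes $G$ a duality group of dimension $m'+d=\sum_{i=0}^{n-1}\asd(G_{i+1}/G_i)=m$; in particular $\cd(G)=m$. The asymptotic-dimension upper bound is then immediate from subadditivity of $\asd$ under group extensions (Bell–Dranishnikov; see also \cite{A2006}): $\asd(G)\le\asd(N)+\asd(Q)=m$.

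It remains to prove the reverse inequality $\asd(G)\ge m$, which I would deduce from $\cd(G)\le\asd(G)$ together with the already established $\cd(G)=m$. The point is that $G$ admits a \emph{cocompact} classifying space: iterating the fact that the total space of a fibration over a finite complex with finite-complex fiber is again homotopy equivalent to a finite complex — realizing each finitely generated surface group by a compact surface, and using that a torsion-free group with a finite-index subgroup of type $\mathrm F$ is itself of type $\mathrm F$ to absorb the $\mathbb{RP}^2$-steps — one sees that $G$ is the fundamental group of an iterated bundle of compact surfaces, hence of type $\mathrm F$. For such a group, Roe's identification of coarse cohomology with $H^*(G;\mathbb{Z}G)$, together with the vanishing of coarse cohomology above the asymptotic dimension (via an anti-Čech approximation by nerves of uniformly bounded covers), gives $H^i(G;\mathbb{Z}G)=0$ for $i>\asd(G)$; since $G$ is a duality group of dimension $m$ we have $H^m(G;\mathbb{Z}G)\ne 0$, so $m\le\asd(G)$. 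Combining the two bounds yields $\asd(G)=\cd(G)=m$, completing the induction.

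I expect this lower bound to be the only genuinely delicate point. The duality-group structure, the identity $\cd(G)=m$, and the bound $\asd(G)\le m$ are all formal consequences of the stability of the relevant classes — duality groups, groups of type $\mathrm{FP}$ (or $\mathrm F$), and groups of finite asymptotic dimension — under group extensions; the actual content is in ruling out the a priori possibility that an iterated extension "collapses" coarsely to something of dimension strictly smaller than its cohomological dimension, which is exactly what the coarse-cohomology input forbids.
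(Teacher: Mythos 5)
Most of your argument runs parallel to the paper's proof of \cref{asim:dim:poly:surface}: induction on the length, the Bieri--Eckmann extension theorem (\cref{Duality:group:s.e.s}) to propagate the duality structure, treatment of a quotient $\mathbb{Z}/2\cong\pi_1(\mathbb{RP}^2)$ by passing to the finite-index subgroup (\cref{Duality:group:extension}), and Bell--Dranishnikov subadditivity for the upper bound $\asd(G)\leq m$. The genuine gap is in your lower bound. To run the coarse-cohomology argument you need a \emph{cocompact} classifying space (type $\mathrm{F}$) for $G$, and to build one you invoke the claim that a torsion-free group with a finite-index subgroup of type $\mathrm{F}$ is itself of type $\mathrm{F}$, in order to ``absorb the $\mathbb{RP}^2$-steps''. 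That is not a theorem: such a $G$ is finitely presented and of type $\mathrm{FP}$ (Serre/Brown), hence admits a \emph{finitely dominated} $K(G,1)$, but whether it admits a \emph{finite} one is governed by Wall's finiteness obstruction in $\widetilde{K}_0(\mathbb{Z}G)$, whose vanishing for torsion-free groups is conjectural (a consequence of Farrell--Jones) and cannot be assumed here. Internal quotients $G_{i+1}/G_i\cong\mathbb{Z}/2$ are perfectly possible for torsion-free $G$ (e.g.\ $2\mathbb{Z}\lhd\mathbb{Z}$), and your iterated-bundle construction breaks exactly at those steps; one cannot instead fiber over $\mathbb{RP}^2$, which is not aspherical, and your finite-index trick only handles a $\mathbb{Z}/2$ quotient sitting at the \emph{top} of the filtration, not in the middle.

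The detour is also unnecessary. Once $G$ is known to be a duality group it is of type $\mathrm{FP}$ (\cref{thm:DualitythenFP}), and the inequality $\asd\geq\cd$ holds already for groups of type $\mathrm{FP}$ --- this is precisely what the paper invokes in \cref{lemma:asdim:dual}, citing Dranishnikov --- so no finite classifying space is needed and the Wall-obstruction issue never arises. Your coarse-cohomology sketch (Roe's identification of coarse cohomology with $H^*_c$ of a uniformly contractible bounded-geometry model, plus anti-\v{C}ech vanishing above the asymptotic dimension) is essentially a proof of that theorem in the type-$\mathrm{F}$ case; replacing it by the $\mathrm{FP}$ version closes the gap without changing anything else in your argument.
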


\begin{remark}
By \cref{asim:dim:poly:surface:0}, if $G$ is a poly-f.g. free group of length $n$, then $\asd(G) = \cd(G) = n$. This implies that every filtration of $G$ must have length $n$ (see \cref{asim:dim:poly:free}). While it was shown in \cite[Theorem 1.1]{jimenezleon} that the cohomological dimension and the length of a normally poly-f.g. free group coincide, we provide here an independent proof. We note that poly-\(\mathbb{Z}\) groups also satisfy this property \cite[Example 5.26]{Lu05}. Furthermore, the fact that any two filtrations of a poly-f.g. free group share the same length \cite[Theorem 16]{MR596323} can also be recovered from \cref{asim:dim:poly:surface:0}.
\end{remark}

\subsection*{Outline of the paper}
In \cref{Preliminaries}, we review the necessary background and preliminary results. \Cref{asym:poly:surfaces} is devoted to proving that the asymptotic dimension of torsion-free poly-f.g. surface groups coincides with their cohomological dimension. In \cref{basics:braid:groups}, we establish the same identity for braid groups of finite-type surfaces and provide an explicit computation of their virtual cohomological dimension. Finally, \cref{Secc:cd-SurfacesIT} addresses braid groups of infinite-type surfaces, showing that their asymptotic dimension also matches their virtual cohomological dimension.\\

\noindent{\bf Acknowledgments.} The authors are grateful to Jesús Hernández Hernández, Rita Jiménez Rolland, Daniel Juan Pineda, and Luis Jorge Sánchez Saldaña for their helpful discussions and insights. The first author would like to thank Noé Bárcenas and Carlos Pérez Estrada for the invitation to the seminar “Prospectos en topología” at the Center of Mathematical Sciences (CCM-UNAM, Morelia), where the initial ideas regarding asymptotic dimension for this work were discussed. The first author was supported by the UNAM Postdoctoral Program (POSDOC), and the second author was supported by ANID Postdoctoral Fellowship 3240229.

\section{Preliminaries}\label{Preliminaries}
In this section, we collect the fundamental definitions and results required for the proofs of our main theorems.

\subsection{Asymptotic dimension of groups}\label{asdim:def} The \emph{asymptotic dimension} of a metric space $(X, d)$, denoted by $\asd(X)$, is the smallest integer $n$ such that for every $R > 0$, there exists a cover $\mathcal{U}$ of $X$ by sets of have uniformly bounded diameter such that every ball of radius $R\geq 0$ in $X$ intersects at most $n+1$ elements of $\mathcal{U}$.

For a finitely generated group $G$, any two Cayley graphs associated with different finite generating sets are quasi-isometric. Since asymptotic dimension is a quasi-isometry invariant, the following definition is well-posed:


\begin{definition}[Asymptotic dimension of finitely generated groups]
  The asymptotic dimension of a finitely generated group $G$, denoted by $\asd(G)$, is the asymptotic dimension of any of its Cayley graphs equipped with the word metric.   
\end{definition}

Following \cite{A2006}, this notion extends naturally to the class of countable groups:


\begin{definition}
The asymptotic dimension of a countable group $G$ is defined as
\[
\asd(G) := \sup \{\asd(H) \mid H\leq G \text{ is a finitely generated subgroup}\}.
\]   
\end{definition}

The following result, which relates the asymptotic dimension of groups within a short exact sequence, will be fundamental for our computations.


\begin{theorem}[Hurewicz type formula]\cite[Theorem 2.3]{A2006}\label{HurewiczFormula}
    Let $1\to K\to G \to H\to 1$ be a short exact sequence of discrete groups. Then $$\asdim(G)\leq \asdim(K)+\asdim(H).$$
\end{theorem}

\subsection{Cohomological and geometric dimension}\label{cd:gd:def}
In this subsection, we recall the definitions of the geometric and cohomological dimensions of a discrete group; for a comprehensive treatment, we refer the reader to ~\cite{Br94}.


A model for the \emph{classifying space} $BG$ (or $\mathrm{K}(G,1)$) of a discrete group $G$ is a connected CW-complex $X$ whose fundamental group is isomorphic to $G$ and whose universal cover
is contractible. Such a space always exists and is unique up to homotopy equivalence.

The \emph{geometric dimension} of $G$, denoted $\gd(G)$, is the minimum dimension of
a CW-complex model for $BG$. Its algebraic analogue is the \emph{cohomological
dimension} $\cd(G)$, defined as the projective dimension of the trivial $\mathbb{Z}G$-module $\mathbb{Z}$:
\[
  \cd(G) \coloneqq \min\bigl\{ n \mid \mathbb{Z} \text{ admits a projective resolution
  of length } n \text{ over $\mathbb{Z}G$} \bigr\}.
\]

These invariants are related by the \emph{Eilenberg--Ganea Theorem}, which states that:
\[
  \cd(G) \leq \gd(G) \leq \max\{3,\, \cd(G)\}.
\]

The question of whether $\gd(G) = \cd(G)$ always holds
is the celebrated \emph{Eilenberg--Ganea conjecture}, which remains open in the
case $\cd(G) = 2$.

If $G$ contains an element of finite order, then $\cd(G) = \infty$. To address groups with torsion, we use the \emph{virtual cohomological dimension} $\vcd$. If $G$ is virtually torsion-free, its $\vcd$ is defined as
\[
  \vcd(G) \coloneqq \cd(H),
\]
where $H \leq G$ is any torsion-free subgroup of finite index. A theorem of Serre
ensures that this value is independent of the choice of $H$, making $\vcd(G)$ well-defined.

\subsection{Surface braid groups}
Surface braid groups arise naturally as the fundamental groups of configuration spaces of surfaces, generalizing Artin's classical braid groups \cite{kassel2008}. Beyond their intrinsic interest in geometric group theory and low-dimensional topology, they play a crucial role in various areas, such as the study of mapping class groups via the Birman exact sequence \cite{FM12}. For a comprehensive survey of their properties, we refer the reader to \cite{MR3382024}. Below, we present the general framework applicable to all topological surfaces.

\subsubsection{Topological surfaces} 
Throughout this paper, unless otherwise specified, a surface is a connected topological surface with (possibly empty) compact boundary, either orientable or non-orientable. We denote the \emph{boundary} and \emph{interior} of $\Sigma$ by $\partial \Sigma$ and $\mathrm{int}(\Sigma)$, respectively.

A surface $\Sigma$ is of \emph{finite type} if its fundamental group is finitely generated; otherwise, it is of \emph{infinite type}. By the Classification Theorem for surfaces \cite{richards1963}, any finite-type surface is homeomorphic to some $\Sigma_{g,k}^r$. We omit $r$ or $k$ from the notation when they are zero.

\subsubsection{Surface braid groups} 
Let $\Sigma$ be a surface and $n \geq 1$. The $n$-th \emph{configuration space} of $\Sigma$ is the set \[F_n(\Sigma) := \{ (x_1, x_2, \ldots, x_n) \in (\mathrm{int}(\Sigma))^n \mid x_i \neq x_j \text{ for } i \neq j \}
\] endowed with the subspace topology from $\Sigma^n$. Under this topology, $F_n(\Sigma)$ is an open connected manifold of dimension $2n$ which is aspherical provided $\Sigma \neq \mathbb{S}^2,\mathbb{RP}^2$. 

The \emph{pure braid group} on $n$ strands, denoted by $\pb_n(\Sigma)$, is defined as the fundamental group $\pi_1(F_n(\Sigma))$. The symmetric group $\frak{S}_n$ acts freely on $F_n(\Sigma)$ by permuting coordinates and the \emph{braid group} on $n$ strands, $\mathrm{B}_n(\Sigma)$, is defined as the fundamental group of the quotient space $C_n(\Sigma):=F_n(\Sigma)/\frak{S}_n$. For $\Sigma \neq \mathbb{S}^2,\mathbb{RP}^2$, both $\pb_n(\Sigma)$ and $\mathrm{B}_n(\Sigma)$ are torsion-free \cite[Section 1.4]{kassel2008} and are related by the short exact sequence: 
\begin{equation}
1 \to \pb_n(\Sigma) \to \mathrm{B}_n(\Sigma) \to \frak{S}_n \to 1.    
\end{equation}

\begin{remark}
    Since the asymptotic dimension is a quasi-isometry invariant and $\pb_n(\Sigma)$ is a finite-index subgroup of $\mathrm{B}_n(\Sigma)$, it follows that $\asd(\pb_n(\Sigma))=\asd(\mathrm{B}_n(\Sigma)).$
\end{remark}


\subsubsection{The Fadell--Neuwirth short exact sequence} 
For a surface \( \Sigma \) with $\partial\Sigma = \emptyset$, the \emph{forgetting map} $\rho: F_{n+1}(\Sigma) \to  F_n(\Sigma)$ is a fibration with fiber homeomorphic to \( \Sigma - Q_n \), where \( Q_n\) is a set of \( n \) distinct points in \( \Sigma \) (see \cite{FadellNeu} or \cite[Lemma 1.27, Exercise 1.4.1 ]{kassel2008}). This implies that \( F_n(\Sigma) \) is an Eilenberg--MacLane space for $n\geq 1$ (excluding $\mathbb{S}^2$ and $\mathbb{RP}^2$). Consequently, the long exact sequence in homotopy yields: 

\begin{proposition}[Fadell-Neuwirth short exact sequence of surface braid groups]
\label{Fadell:Neuwirth}
Let \( \Sigma \) be a surface with empty boundary, other that $\mathbb{S}^2$ and $\mathbb{RP}^2$. For all $n\geq 1$, there is a short exact sequence
\[
1 \to \pi_1(\Sigma \setminus Q_n) \to \pb_{n+1}(\Sigma) \xrightarrow{\varphi} \pb_n(\Sigma) \to 1.
\]
\end{proposition}

It suffices to compute the asymptotic dimension for braid groups of surfaces with empty boundary. Indeed, capping each boundary component of $\Sigma_{g,k}^r$ with an once-punctured disk induces a homotopy equivalence $i\colon \Sigma_{g,k}^r \to \Sigma_{g,k+r}$, which in turn induces a homotopy equivalence between their respective configuration spaces.

\begin{lemma}\cite[Remark 8 item (d)]{MR3382024}\label{surfacebraid:puntures:boundaries}
    Let $\Sigma_{g,k}^r$ be a finite-type surface and $n\geq 1$. Then $\pb_n(\Sigma_{g,k}^r) \cong \pb_n(\Sigma_{g,k+r})$.
\end{lemma}

\subsection{Mapping class groups of finite-type surfaces} For a comprehensive introduction to mapping class groups, we refer the reader to \cite{FM12}. The {\it mapping class group} of finite-type surface $\Sigma$, denoted by $\mcg(\Sigma)$, is the group of all isotopy classes of homeomorphisms $f\colon \Sigma \rightarrow \Sigma$ that restrict to the identity on $\partial \Sigma$. If $\Sigma$ is orientable, these homeomorphisms are further required to be orientation-preserving. The \emph{pure mapping class group} of $\Sigma$, $\pmcg(\Sigma)$, is the subgroup consisting of mapping classes that fix each puncture individually. For surface with $n$ punctures, these groups are related by the  short exact sequence:  
\begin{equation}
    1 \to \pmcg(\Sigma) \to \mcg(\Sigma) \to  \frak{S}_n \to 1.
\end{equation}

For any surface of finite type, $\mcg(\Sigma)$ is finitely presented (see \cite[Section 5.2]{FM12} for the orientable case and \cite{Szepietowski2008} for the non-orientable case).\\    

Our proofs rely on the Birman exact sequence for surfaces of finite type. While the orientable case is established in \cite[Theorem 4.6]{FM12}, the argument remains valid in the non-orientable setting. 

\begin{theorem}[Birman exact sequence]\label{thm:BES}
    Let $\Sigma_{g,k}^r$ be a finite-type surface with negative Euler characteristic. Then the following sequence is exact:

    \begin{equation*}\label{eq:SEB}
    1\rightarrow\pi_1(\Sigma_{g,k}^r)\rightarrow \pmcg(\Sigma_{g,k+1}^r)\rightarrow \pmcg(\Sigma_{g,k}^r) \rightarrow 1.
    \end{equation*}
\end{theorem}

The following lemma is used in \cref{asd:sphere} to compute the $\vcd$ of the braid groups of the sphere and the projective plane.

\begin{lemma}\label{asd:mcg:Birman}
Let $\Sigma_{g,k}$ be a finite-type surface with $\partial \Sigma = \emptyset$ and negative Euler characteristic. If $\asd(\mcg(\Sigma_{g,k}))=\vcd(\mcg(\Sigma_{g,k}))$ for some $k\geq 0$ then for all $n\geq k$ \[\asd(\mcg(\Sigma_{g,n}))=\vcd(\mcg(\Sigma_{g,n})).\]
 \end{lemma}

\begin{proof}
We proceed by induction on $n\geq k$. The base case $n=k$ holds by hypothesis. Assuming the result for $n$ we consider the Birman exact sequence (\cref{thm:BES})  
\begin{equation}\label{eq:21}
1\rightarrow\pi_1(\Sigma_{g,n})\rightarrow \pmcg(\Sigma_{g,n+1})\rightarrow \pmcg(\Sigma_{g,n}) \rightarrow 1.
\end{equation} 

By \cref{surfaces:groups:duality:groups} and \cref{asd:surfaces}, $\pi_1(\Sigma_{g,n})$ is a duality group of dimension $\asd(\pi_1(\Sigma_{g,n}))=1$. Furthermore, $\pmcg(\Sigma_{g,n})$ is a virtual duality group (see \cite[Theorem 4.1]{harer1986} and \cite[Theorem 6.9]{I87}) which, by the inductive hypothesis, satisfies $\asd(\pmcg(\Sigma_{g,n}))=\vcd(\pmcg(\Sigma_{g,n}))$. The result then follows from \cref{Thm:GeneralResult}. 
\end{proof}

\subsection{(Virtual) duality groups}\label{duality:groups}
We briefly recall the notion of duality groups in the sense of Bieri--Eckmann and state several results required for our proofs. For a detailed treatment, we refer the reader to \cite{Eckmann1973}. Duality groups generalize Poincaré duality groups by extending their fundamental properties into the framework of group (co)homology \cite{Br94}.\\

Given a group $G$ and a right $G$-module $C$, $H_k(G;C)$ and $H^k(G;C)$ denotes the \emph{$k$-th homology and cohomology} group of $G$ with coefficients in $C$, respectively. 

\begin{definition}
    A group $G$ is a \emph{duality group} of dimension $n$ with respect to the dualizing module $C$ if there exists a fundamental class $e \in H_n(G; C)$ such that the cap-product with $e$ induces isomorphisms 

\[
(e \cap -) \colon H^k(G; A) \xrightarrow{\cong} H_{n-k}(G; C \otimes A),
\]
for all $k\in \Z$ and all left $G$-modules $A$. A group $G$ is a \emph{virtual duality group}  of dimension $n$ if it contains a finite-index subgroup that is a duality group of dimension $n$.
\end{definition}

For any (virtual) duality group, its dimension coincides with its (virtual) cohomological dimension. We state this for future reference.

\begin{proposition}\cite[Proposition 1.2]{Eckmann1973}\label{remark:cd-duality-group}
If $G$ is a duality group (resp., virtual duality group) of dimension $n$ then $\cd(G)=n$ (resp., $\vcd(G)=n$).
\end{proposition} 

We now recall two results from \cite{Eckmann1973} concerning extensions and subgroups.

\begin{theorem}\cite[Theorem 3.3 and Theorem 2.2]{Eckmann1973}\label{Duality:group:extension}
  Let $G$ be a torsion-free group and  $H\leq G$ a finite-index subgroup. Then  $G$ is a duality group of dimension $n$ if and only if  $H$ is a duality group of dimension $n$.  
 \end{theorem}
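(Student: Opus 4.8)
The plan is to reduce everything to the homological characterization of duality groups due to Bieri and Eckmann (\cite[Theorem 2.2]{Eckmann1973}, see also \cite[Theorem 10.1]{Br94}): a group $G$ is a duality group of dimension $n$ if and only if $G$ is of type $\mathrm{FP}$, $H^i(G;\mathbb{Z}G)=0$ for all $i\neq n$, and $H^n(G;\mathbb{Z}G)$ is torsion-free as an abelian group. It then suffices to prove that each of these three properties holds for $G$ if and only if it holds for $H$. It is worth noting in advance that the implication ``$G$ a duality group $\Rightarrow$ $H$ a duality group'' does not really need the torsion-free hypothesis, since a duality group is automatically torsion-free; that hypothesis is used only for the converse.

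I would treat the two cohomological conditions at once, by proving that $H^i(G;\mathbb{Z}G)^{d}\cong H^i(H;\mathbb{Z}H)^{d}$ as abelian groups for every $i$, where $d=[G:H]$. Restricting the left regular representation gives $\mathrm{Res}^G_H\,\mathbb{Z}G\cong(\mathbb{Z}H)^{d}$ as $\mathbb{Z}H$-modules, hence $H^i(H;\mathrm{Res}^G_H\,\mathbb{Z}G)\cong H^i(H;\mathbb{Z}H)^{d}$. On the other hand, Shapiro's lemma together with the coincidence of induction and coinduction along a finite-index inclusion yields $H^i(H;\mathrm{Res}^G_H\,\mathbb{Z}G)\cong H^i(G;\mathrm{Ind}^G_H\mathrm{Res}^G_H\,\mathbb{Z}G)$, and the projection formula identifies $\mathrm{Ind}^G_H\mathrm{Res}^G_H\,\mathbb{Z}G\cong\mathbb{Z}[G/H]\otimes_{\mathbb{Z}}\mathbb{Z}G\cong(\mathbb{Z}G)^{d}$ as $\mathbb{Z}G$-modules, so $H^i(G;\mathrm{Ind}^G_H\mathrm{Res}^G_H\,\mathbb{Z}G)\cong H^i(G;\mathbb{Z}G)^{d}$. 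Since a finite direct sum $A^{d}$ vanishes exactly when $A$ does and is torsion-free exactly when $A$ is, the conditions ``$H^i=0$ for $i\neq n$'' and ``$H^n$ torsion-free'' transfer between $G$ and $H$ in both directions. None of this uses any finiteness hypothesis.

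For type $\mathrm{FP}$, the forward implication is immediate: restricting a finite resolution of $\mathbb{Z}$ by finitely generated projective $\mathbb{Z}G$-modules along $\mathbb{Z}H\hookrightarrow\mathbb{Z}G$ produces such a resolution over $\mathbb{Z}H$, because $\mathbb{Z}G$ is finitely generated and free as a $\mathbb{Z}H$-module. For the converse, assume $H$ is of type $\mathrm{FP}$. Then $H$ is of type $\mathrm{FP}_\infty$, hence so is $G$ (type $\mathrm{FP}_\infty$ is inherited by finite-index overgroups, \cite[Chapter VIII]{Br94}), and $\mathrm{cd}(H)<\infty$; since $G$ is torsion-free, Serre's theorem gives $\mathrm{cd}(G)=\mathrm{cd}(H)=n<\infty$. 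Finally, a group of type $\mathrm{FP}_\infty$ with $\mathrm{cd}(G)=n<\infty$ is of type $\mathrm{FP}$: in a resolution of $\mathbb{Z}$ by finitely generated projective $\mathbb{Z}G$-modules, the $n$-th syzygy is projective (because $\mathrm{cd}(G)\le n$) and finitely generated (because $G$ is of type $\mathrm{FP}_{n}$), which truncates the resolution to a finite finitely generated projective one.

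Putting this together, $G$ is a duality group of dimension $n$ if and only if $G$ is of type $\mathrm{FP}$ with $H^i(G;\mathbb{Z}G)$ vanishing for $i\neq n$ and torsion-free for $i=n$, if and only if the same holds for $H$, if and only if $H$ is a duality group of dimension $n$. I expect the only real obstacle to be the converse direction of the type-$\mathrm{FP}$ step; there the indispensable input is the finiteness of $\mathrm{cd}(G)$, which fails in general without the torsion-free hypothesis (for instance $\mathbb{Z}$ is a duality group of dimension $1$, while its index-two overgroup $D_\infty$ has torsion and hence is not a duality group) — this is exactly why the theorem requires $G$ to be torsion-free.
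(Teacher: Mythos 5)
Your proof is correct, and it is worth noting at the outset that the paper does not prove this statement: it is imported verbatim from Bieri--Eckmann \cite{Eckmann1973} (their Theorems 3.3 and 2.2), so there is no internal argument to compare against. What you wrote is essentially the standard (indeed, essentially the original) proof: reduce to the algebraic characterization of duality groups --- type $\mathrm{FP}$, $H^i(G;\mathbb{Z}G)=0$ for $i\neq n$, and $H^n(G;\mathbb{Z}G)$ torsion-free (see \cite[Theorem 10.1, Ch.~VIII]{Br94}) --- transfer the two cohomological conditions along the finite-index inclusion, and transfer type $\mathrm{FP}$ using the finite-index invariance of $\mathrm{FP}_n$ together with Serre's theorem $\cd(G)=\cd(H)$, which is exactly where torsion-freeness of $G$ enters; your $D_\infty$ example correctly locates why it cannot be dropped. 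Two small remarks, neither of which is a gap. First, since $[G:H]<\infty$ gives $\mathbb{Z}G\cong\mathrm{Coind}^G_H\mathbb{Z}H$, Shapiro's lemma already yields $H^i(G;\mathbb{Z}G)\cong H^i(H;\mathbb{Z}H)$ directly, so the detour through $d$-th powers (Res, Ind$\circ$Res and the projection formula) can be shortcut, though it is perfectly valid as written. Second, in the final truncation step the finite generation of the $n$-th kernel should be read as: take a partial resolution $P_n\to\cdots\to P_0\to\mathbb{Z}\to 0$ by finitely generated projectives (available since $G$ is $\mathrm{FP}_n$), so that $K=\ker(P_{n-1}\to P_{n-2})=\mathrm{im}(P_n\to P_{n-1})$ is finitely generated, while $\cd(G)\leq n$ forces $K$ to be projective; this is what your parenthetical asserts and it is the standard argument (\cite[Ch.~VIII, \S 2 and \S 6]{Br94}).
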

 
The following result generalizes \cite[Theorem 3.5]{Eckmann1973} to the virtual case.

\begin{theorem}\label{Duality:group:s.e.s}
 Let $1\to K\to G\xrightarrow[]{p} H\to 1$ be a short exact sequence of groups. If $K$ is a duality group of dimension $n$ and $H$ is a duality (resp., virtual duality) group of dimension $m$, then $G$ is a duality (resp., virtual duality) group of dimension $n+m$.   
\end{theorem}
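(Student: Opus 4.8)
The plan is to reduce the statement to the non-virtual version, which is essentially \cite[Theorem 3.5]{Eckmann1973}, and then to handle the passage from a virtual duality quotient to a virtual duality extension using \cref{Duality:group:extension}. First I would observe that the short exact sequence $1 \to K \to G \xrightarrow{p} H \to 1$ restricts nicely over finite-index subgroups of $H$: given a finite-index duality subgroup $H_0 \leq H$ of dimension $m$, set $G_0 := p^{-1}(H_0)$, so that $[G:G_0] = [H:H_0] < \infty$ and there is a short exact sequence
\[
1 \to K \to G_0 \xrightarrow{p|_{G_0}} H_0 \to 1.
\]
Here $K$ is still a duality group of dimension $n$ (it is unchanged) and $H_0$ is now an honest duality group of dimension $m$, so the non-virtual case applies to $G_0$.

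The non-virtual case is where the actual work lies. Given $1 \to K \to G \to H \to 1$ with $K$ a duality group of dimension $n$ with dualizing module $C_K$ and $H$ a duality group of dimension $m$ with dualizing module $C_H$, I would show $G$ is a duality group of dimension $n+m$. The standard route is via the characterization of duality groups of type $\fps$: a group $\Gamma$ of type $\fps$ and of finite cohomological dimension $d$ is a duality group of dimension $d$ if and only if $H^i(\Gamma; \Z\Gamma) = 0$ for $i \neq d$ and $H^d(\Gamma; \Z\Gamma)$ is torsion-free as an abelian group (Bieri--Eckmann). So the steps are: (i) $G$ is of type $\fps$ since it is an extension of an $\fps$ group by an $\fps$ group; (ii) $\cd(G) = \cd(K) + \cd(H) = n + m$, using that both have finite cohomological dimension and that the Lyndon--Hochschild--Serre spectral sequence $H^p(H; H^q(K; \Z G)) \Rightarrow H^{p+q}(G; \Z G)$ collapses appropriately at the top; (iii) compute $H^i(G; \Z G)$ via this spectral sequence: since $\Z G$ is free over $\Z K$, one gets $H^q(K; \Z G) = H^q(K; \Z K) \otimes_{\Z} \Z H$ as a $\Z H$-module, which vanishes for $q \neq n$ and equals $C_K \otimes_\Z \Z H$ (an induced, hence free $\Z H$-module up to the $K$-coinvariants bookkeeping) for $q = n$; then the spectral sequence degenerates to $H^i(G; \Z G) \cong H^{i-n}(H; C_K \otimes_\Z \Z H)$, which vanishes unless $i - n = m$ and, for $i = n+m$, is $H^m(H; \Z H) \otimes_\Z C_K \cong C_H \otimes_\Z C_K$, torsion-free as an abelian group because both factors are. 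This yields that $G$ is a duality group of dimension $n+m$ with dualizing module $C_K \otimes_\Z C_H$ (with the appropriate diagonal-type $G$-action).

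Finally I would assemble: $G_0$ is a duality group of dimension $n+m$ by the non-virtual case, and $G_0$ has finite index in $G$, so by \cref{Duality:group:extension} (which requires only that $G$ be torsion-free, and $G$ is torsion-free since it contains the torsion-free $G_0$ with finite index and any torsion element of $G$ would have a power in $G_0$ — more directly, $G$ is an extension of torsion-free $H$ by torsion-free $K$) the group $G$ itself is a virtual duality group of dimension $n+m$. In the case where $H$ is already a genuine duality group, no passage to a subgroup is needed and $G$ is a genuine duality group.

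The main obstacle I anticipate is the careful module-theoretic bookkeeping in step (iii): one must track the $\Z H$-module structure on $H^n(K; \Z G)$ precisely, verify that the higher differentials and extension problems in the Lyndon--Hochschild--Serre spectral sequence vanish (this uses that $H^q(K;\Z G) = 0$ for $q \neq n$, so the spectral sequence has only one nonzero row and collapses with no extension problems), and confirm the torsion-freeness of $C_K \otimes_\Z C_H$ over $\Z$. The type-$\fps$ and finiteness-of-$\cd$ inputs are routine consequences of the corresponding closure properties for group extensions, so they should not cause trouble; the only genuinely delicate point is that the hypothesis ``$K$ is a duality group'' is exactly what forces the single-row collapse, which is the reason the argument works without any finiteness assumption on $H$ beyond being a (virtual) duality group.
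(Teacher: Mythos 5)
Your reduction of the virtual case is essentially the same as the paper's: pull back a finite-index duality subgroup $H_0\leq H$ to $G_0=p^{-1}(H_0)$, note that the kernel of $p|_{G_0}$ is still $K$ (the paper writes it as $K\cap p^{-1}(H_0)$ and appeals to \cref{Duality:group:extension}; your observation that it is literally $K$ is cleaner), apply the non-virtual statement to $1\to K\to G_0\to H_0\to 1$, and conclude. The only real divergence is that the paper disposes of the non-virtual case by citing \cite[Theorem 3.5]{Eckmann1973}, whereas you re-derive it via the Bieri--Eckmann characterization (type $\fps$, finite cohomological dimension, $H^i(\Gamma;\Z\Gamma)$ concentrated in the top degree and torsion-free there) together with the one-row collapse of the Lyndon--Hochschild--Serre spectral sequence for $H^*(G;\Z G)$; that sketch is the standard proof of the cited theorem and is sound, just more work than the paper needs. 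One blemish in your final assembly: in the virtual case $H$ need not be torsion-free, so neither of your arguments that $G$ is torsion-free is valid (an extension by a virtual duality quotient can have torsion, and ``a torsion element has a power in $G_0$'' proves nothing, since that power may be trivial), hence \cref{Duality:group:extension} cannot be invoked for $G$ in general. Fortunately it is also unnecessary: once $G_0$ is a duality group of dimension $n+m$ of finite index in $G$, the assertion that $G$ is a virtual duality group of dimension $n+m$ is exactly the definition, so your conclusion stands after deleting that parenthetical.
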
 

\begin{proof}
The case where $H$ is a duality group is proved in \cite[Theorem 3.5]{Eckmann1973}. Suppose $H$ is a virtual duality group and let $H^\prime \leq H$ be a duality subgroup of finite-index. Since $p^{-1}(H^\prime)$ is a finite-index subgroup of $G$, it suffices to show it is a duality group of dimension $n+m$. Consider the restricted sequence: 
\begin{equation}\label{Equation0}
    1\to K\cap p^{-1}(H^\prime)\to p^{-1}(H^\prime)\to H'\to 1. 
\end{equation}

We have that $K$ is a duality group of dimension $n$ which is torsion free by \cref{remark:cd-duality-group}, and $K\cap p^{-1}(H^\prime)$ is a finite-index subgroup of $K$. By \cref{Duality:group:extension} we conclude that $K\cap p^{-1}(H^\prime)$ is a duality group of dimension $n$. Applying the aforementioned result to \cref{Equation0}, we conclude that $p^{-1}(H^\prime)$ is a duality group of dimension $n+m$.
 \end{proof}

Recall that a group \( G \) is of \emph{type \( \mathrm{FP} \)} over \( \mathbb{Z} \) if the trivial \( G \)-module \( \mathbb{Z} \) admits a finite projective resolution of finitely generated $G$-modules. If \( G \) is a finitely presented group then \( G \) is of type \( \fps \) if and only if there exists a finitely dominated \( K(G,1) \) \cite[Proposition 6.3 and Theorem 7.1 p. 205]{Br94}. Throughout this paper, groups of type $\fps$ are assumed to be finitely presented.

\begin{theorem}\cite[Theorem 10.1]{Br94}\cite[Corollary 1]{brown1975homological}\label{thm:DualitythenFP}
    Every duality group is of type $\fps$.
\end{theorem}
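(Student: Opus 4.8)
The plan is to extract from the cap-product duality the two properties that, taken together, amount to being of type $\fps$: finite cohomological dimension, and type $\fps_n$ for every $n$. The first costs nothing: by \cref{remark:cd-duality-group} a duality group of dimension $n$ satisfies $\cd(G)=n<\infty$. So the real work is the finiteness in each degree, which I would obtain by a colimit argument, and then splice the two facts together by a truncation. (The degenerate case $n=0$ forces $G$ trivial and is immediate, so I would assume $n\ge 1$.)

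The crucial observation is that the duality isomorphism $(e\cap-)\colon H^k(G;A)\xrightarrow{\ \cong\ }H_{n-k}(G;C\otimes_{\mathbb{Z}}A)$ is natural in the coefficient module $A$, with the dualizing module $C$ held fixed. The functor $A\mapsto C\otimes_{\mathbb{Z}}A$ commutes with arbitrary colimits, in particular with filtered colimits, and ordinary group homology $H_\ast(G;-)$ commutes with filtered colimits of coefficient modules. Composing these two facts through the natural isomorphism, for \emph{every} $k\ge 0$ the functor $H^k(G;-)$ commutes with direct limits: for any direct system $(A_\lambda)_{\lambda}$ the canonical map $\varinjlim_\lambda H^k(G;A_\lambda)\to H^k\bigl(G;\varinjlim_\lambda A_\lambda\bigr)$ is an isomorphism.

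Next I would invoke the homological finiteness criterion of Bieri and Brown (see \cite[Ch.~VIII]{Br94} and \cite{brown1975homological}): for a given $n$, a group $G$ is of type $\fps_n$ as soon as the functors $H^k(G;-)$ commute with direct limits for $0\le k<n$ and the comparison map is injective for $k=n$. By the previous paragraph this hypothesis holds for every $n$, so $G$ is of type $\fps_n$ for all $n$. Now I would conclude in the usual way: pick a projective resolution $\cdots\to P_1\to P_0\to\mathbb{Z}\to 0$ of the trivial $\mathbb{Z}G$-module with $P_0,\dots,P_n$ finitely generated, and let $K$ be the image of $P_n\to P_{n-1}$. Then $K$ is a quotient of the finitely generated module $P_n$, hence finitely generated, and dimension shifting gives $\mathrm{Ext}^1_{\mathbb{Z}G}(K,-)\cong H^{n+1}(G;-)=0$ since $\cd(G)\le n$, so $K$ is projective. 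Hence $0\to K\to P_{n-1}\to\cdots\to P_0\to\mathbb{Z}\to 0$ is a finite resolution of $\mathbb{Z}$ by finitely generated projective $\mathbb{Z}G$-modules, i.e.\ $G$ is of type $\fps$.

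The only delicate point I anticipate is invoking the finiteness criterion in exactly the right form — the precise threshold "$<n$ versus $\le n$" and the role of the injectivity condition in the top degree — together with spelling out the naturality of the cap-product duality carefully enough that feeding it a direct system of coefficients is legitimate. Everything else is formal.
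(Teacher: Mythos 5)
Your argument is correct: the paper offers no proof of its own here (it simply cites Brown), and your route — naturality of the cap product makes $H^k(G;-)$ commute with filtered colimits in all degrees, the Bieri--Brown finiteness criterion (isomorphism for $k<n$, injectivity at $k=n$) then gives type $\fps_n$ for every $n$, and finiteness of $\cd(G)=n$ lets you truncate a resolution that is finitely generated through degree $n$ at a finitely generated projective kernel — is exactly the standard argument in the cited sources. All the delicate points you flagged (the precise form of the criterion and the naturality in the coefficient module with the dualizing module fixed) are handled correctly, so nothing is missing.
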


The following result will be an important piece in our arguments.

\begin{theorem}\label{lemma:asdim:dual}
Let $G$ be a virtual duality group of dimension $n$. Then
\[
\asd(G) \geq \vcd(G)=n.
\]
\end{theorem}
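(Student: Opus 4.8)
The plan is to establish the two claims $\asd(G) \geq \vcd(G)$ and $\vcd(G) = n$ separately, the second being essentially immediate and the first being the substantive point.

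First I would dispose of the equality $\vcd(G) = n$. Since $G$ is a virtual duality group of dimension $n$, it contains a finite-index subgroup $H$ that is a duality group of dimension $n$. By \cref{remark:cd-duality-group}, $\cd(H) = n$, and since $H$ has finite index in $G$ and is torsion-free (being a duality group), $\vcd(G) = \cd(H) = n$ by definition of virtual cohomological dimension.

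For the inequality $\asd(G) \geq \vcd(G)$, the first step is to reduce to the torsion-free case: asymptotic dimension is invariant under passing to a finite-index subgroup (indeed under commensurability), so $\asd(G) = \asd(H)$, and likewise $\vcd(G) = \cd(H)$ as just noted; hence it suffices to prove $\asd(H) \geq \cd(H)$ for the duality group $H$ of dimension $n$. The key input is that a duality group of dimension $n$ is of type $\mathrm{FP}$ (\cref{thm:DualitythenFP}), in particular it has a finite free (equivalently, finitely generated projective) resolution of $\mathbb{Z}$ of length $n$, and by the defining duality isomorphism with dualizing module $C$ one has $H^n(H; \mathbb{Z}H) \cong H_0(H; C \otimes \mathbb{Z}H) \neq 0$, so $H^n(H; \mathbb{Z}H)$ is nonzero. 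The plan is then to invoke the general principle that for a group $G$ of type $\mathrm{FP}$ with $H^n(G;\mathbb{Z}G) \neq 0$ and $\cd(G) = n$, one has $\asd(G) \geq n$. One clean route: a group of type $\mathrm{FP}_\infty$ with $\cd(G) = n$ and $H^n(G;\mathbb{Z}G)$ free abelian (which holds for duality groups, where $H^n(G;\mathbb{Z}G) \cong C$ is $\mathbb{Z}$-free) is an $n$-dimensional "coarse $\mathrm{PD}$" situation, and results relating $\asd$ to cohomological dimension — e.g. via the fact that $\asd(G) < n$ would force the compactly-supported cohomology / the top-dimensional cohomology with group-ring coefficients to vanish — give the bound. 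Concretely I would cite or adapt the argument that if $\asd(G) = d$ then $H^k(G; \mathbb{Z}G) = 0$ for $k > d$ (this uses that finite $\asd$ implies a controlled/uniformly contractible model and a vanishing of cohomology above the asymptotic dimension, in the spirit of work of Dranishnikov and others), so from $H^n(H;\mathbb{Z}H)\neq 0$ we get $\asd(H) \geq n$.

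The main obstacle is precisely the implication "$\asd(G) \geq$ (top nonzero degree of $H^*(G;\mathbb{Z}G)$)"; this is not formal and relies on a nontrivial theorem connecting asymptotic dimension to cohomological dimension for groups of type $\mathrm{FP}$ (the relevant statement being that $\cd(G) \leq \asd(G)$ whenever $\asd(G) < \infty$ and $G$ is, say, of type $\mathrm{FP}$, or the sharper cohomological-vanishing version above). I would locate the exact reference in the literature on asymptotic dimension (Dranishnikov's work on cohomological and asymptotic dimension, and its refinements) and apply it to $H$, which is legitimate since $H$ is torsion-free of type $\mathrm{FP}$ with $\cd(H) = n$. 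Once that black box is in place, the proof is just the bookkeeping of the reductions above: pass to the finite-index duality subgroup, use $\mathrm{FP}$-ness and the duality isomorphism to see $H^n(H;\mathbb{Z}H) \neq 0$, conclude $\asd(H) \geq n = \cd(H) = \vcd(G)$, and transport back to $G$ via commensurability invariance of $\asd$.
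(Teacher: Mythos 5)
Your proposal is correct and follows essentially the same route as the paper: pass to the finite-index duality subgroup $H$, note it is of type $\mathrm{FP}$ via \cref{thm:DualitythenFP}, invoke the Dranishnikov-type result that $\asd \geq \cd$ for groups of type $\mathrm{FP}$ (the paper cites exactly such a theorem), and transport back to $G$ by finite-index invariance of $\asd$. The extra observation that $H^n(H;\mathbb{Z}H)\neq 0$ is not needed once that cited result is in hand, but it does no harm.
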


\begin{proof}
Let $H\leq G$ be a finite-index duality subgroup of dimension $n$. Then $H$ is of type FP by \cref{thm:DualitythenFP}. From \cite[Theorem 5.11]{Dra:Alex}, we have \[\asd(G)=\asd(H) \geq \cd(H)=\vcd(G)=n.\]
\end{proof}

In the next section we will use the following result in order to stablish that braid groups of a finite-type surface are finitely-generated poly-surface groups (for its definition see \cref{Subsection:Poly-SurfaceGroups}).

\begin{lemma}\label{normmally:polisurfaces:ses}
    Let $1\to K\to G\xrightarrow[]{p} H\to 1$ be a short exact sequence. If $K$ and $H$ are poly-surface groups of length $m$ and $n$, respectively, then $G$ is a poly-surface group of length at most $m+n$. Furthermore, if $H$ is normally poly-surface and $K$ has length one, then $G$ is a normally poly-surface of length at most $n+1$.
\end{lemma}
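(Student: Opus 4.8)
The strategy is to build the desired filtration of $G$ from the given filtrations of $K$ and $H$ in the obvious way: refine $K$'s filtration at the bottom and pull back $H$'s filtration along $p$ at the top. Concretely, let $1 = K_0 \lhd K_1 \lhd \cdots \lhd K_m = K$ be a surface filtration of $K$, and let $1 = H_0 \lhd H_1 \lhd \cdots \lhd H_n = H$ be one for $H$. The first step is to set $G_i := K_i$ for $0 \le i \le m$, and $G_{m+j} := p^{-1}(H_j)$ for $0 \le j \le n$; note $G_m = K = p^{-1}(H_0) = p^{-1}(1)$, so the two pieces agree at the junction and we get a single chain $1 = G_0 \lhd G_1 \lhd \cdots \lhd G_{m+n} = G$ of length $m+n$. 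I would then check the two things that need checking: that each term is normal in the next, and that each successive quotient is a surface group. Normality of $G_i \lhd G_{i+1}$ for $i < m$ is inherited from $K$'s filtration; normality of $p^{-1}(H_j) \lhd p^{-1}(H_{j+1})$ follows because $p$ is surjective and preimages of normal subgroups under surjections are normal (and contain $\ker p = K$, so the correspondence is clean). For the quotients: for $i < m$ we have $G_{i+1}/G_i = K_{i+1}/K_i$, a surface group by hypothesis; for $i = m + j$ with $j < n$, the surjection $p$ induces an isomorphism $p^{-1}(H_{j+1})/p^{-1}(H_j) \xrightarrow{\cong} H_{j+1}/H_j$ (again because $\ker p \subseteq p^{-1}(H_j)$), which is a surface group by hypothesis. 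This proves the first assertion.

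For the second assertion, assume $H$ is \emph{normally} poly-surface, so the $H_j$ can be taken normal in $H$, and assume $K$ has length one, so $K$ itself is (isomorphic to) a surface group and we may take the trivial length-one filtration $1 = K_0 \lhd K_1 = K$. Then the chain above becomes $1 = G_0 \lhd G_1 = K \lhd p^{-1}(H_1) \lhd \cdots \lhd p^{-1}(H_n) = G$, which has length $n+1$. It remains to verify that \emph{every} term is normal in $G$ (not merely in the next term). The term $G_1 = K = \ker p$ is normal in $G$. Each $G_{1+j} = p^{-1}(H_j)$ with $H_j \lhd H$ is the preimage under the homomorphism $p$ of a normal subgroup of the target, hence normal in $G$. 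So all terms are normal in $G$, giving a normally poly-surface filtration of length $n+1$, as claimed.

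There is essentially no serious obstacle here; the lemma is a routine bookkeeping exercise with the correspondence theorem for subgroups under a surjection. The only points requiring a word of care are: (i) making sure the two filtrations are glued correctly at $G_m = K$, using $p^{-1}(1) = K$; (ii) invoking the standard isomorphism $p^{-1}(B)/p^{-1}(A) \cong B/A$ for $A \le B \le H$, which is valid precisely because $p$ is onto and $\ker p \subseteq p^{-1}(A)$; and (iii) being careful about the distinction between ``normal in the next term of the filtration'' (needed for the poly-surface conclusion) versus ``normal in $G$'' (needed for the normally-poly-surface conclusion), and checking that the length-one hypothesis on $K$ is exactly what is needed so that the bottom term $K$ is itself normal in $G$ — if $K$ had longer filtration, its intermediate terms $K_i$ need not be normal in $G$. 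I would keep the write-up to a few sentences, stating the filtration explicitly and citing the correspondence theorem.
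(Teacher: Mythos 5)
Your proposal is correct and follows essentially the same route as the paper: pull back the filtration of $H$ along $p$, identify the successive quotients $p^{-1}(H_{j+1})/p^{-1}(H_j)\cong H_{j+1}/H_j$ via the correspondence theorem, splice in the filtration of $K$ at the bottom, and in the normal case use that preimages of normal subgroups of $H$ are normal in $G$ together with the length-one hypothesis on $K$. No substantive differences.
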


\begin{proof}
Given a filtration $1 = H_0 \lhd H_1 \lhd \cdots \lhd H_{n-1} \lhd H_n = H$, the preimages $p^{-1}(H_i)$ provide a filtration for $G$ \cite[Theorem 5.11]{MR600654}    
\begin{equation}\label{equ:1:nor}
    \ker(p)= p^{-1}(H_0) \lhd p^{-1}(H_1) \lhd \cdots \lhd p^{-1}(H_{n-1}) \lhd G_n = G.
\end{equation}

Since $p^{-1}(H_i)/p^{-1}(H_{i-1}) \cong H_i/H_{i-1}$ for every $i=1,\ldots, n$ \cite[Corollary 5.8]{MR600654} and $K\cong \ker(p)$ is poly-surface, we can concatenate their filtrations to obtain one for $G$ of length $n+m$. Normality follows if the $H_i$ are normal in $H$ and $K$ is a surface group.


\end{proof}

The next result is used in the computation of the asymptotic dimension of braid groups of the sphere and the projective plane (see Theorem \ref{asd:sphere}).

\begin{lemma}\label{vcd:extension}
Let $1\to F\to G\xrightarrow[]{p} H\to 1$ be a short exact sequence whit $F$ finite. If $G$ is virtually torsion-free then $\vcd(G)=\vcd(H)$ and $\asd(G)=\asd(H)$. Furthermore, if $H$ is a virtual duality group, so is $G$.    
\end{lemma}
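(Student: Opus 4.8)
The plan is to establish the three conclusions of Lemma~\ref{vcd:extension} in turn, using standard facts about finite-index subgroups together with the results already recorded in this section. First I would produce a torsion-free finite-index subgroup of $G$ that sits nicely over $H$. Since $G$ is virtually torsion-free, pick a finite-index torsion-free subgroup $G_0\leq G$. Because $F$ is finite and $G_0$ is torsion-free, $G_0\cap F=1$, so the restriction of $p$ to $G_0$ is injective; hence $p(G_0)=:H_0$ is a subgroup of $H$ isomorphic to $G_0$, and $[H:H_0]=[G:G_0\cdot F]\leq[G:G_0]<\infty$ (more directly, $p$ maps the finite-index subgroup $G_0$ onto $H_0$, and a preimage argument shows $[H:H_0]<\infty$). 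Thus $G_0\cong H_0$ with both of finite index in $G$ and $H$ respectively.

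From this isomorphism the first two equalities are immediate: $\vcd(G)=\cd(G_0)=\cd(H_0)=\vcd(H)$, using that vcd is computed on any torsion-free finite-index subgroup (Serre's theorem, invoked after Theorem~\ref{main:them:0} in the introduction); here one notes $H$ is virtually torsion-free because it contains the torsion-free finite-index subgroup $H_0$. Likewise $\asd(G)=\asd(G_0)=\asd(H_0)=\asd(H)$, since asymptotic dimension is invariant under passing to finite-index subgroups (and under group isomorphism), a fact already used repeatedly in the paper, e.g.\ in the remark that $\asd(\pb_n(\Sigma))=\asd(\mathrm{B}_n(\Sigma))$. For the last assertion, suppose $H$ is a virtual duality group of dimension $m$; then $H$ has a finite-index duality subgroup $H_1$ of dimension $m$, and $H_0\cap H_1$ is a finite-index subgroup of $H$ which, being finite-index in the torsion-free duality group $H_1$, is itself a duality group of dimension $m$ by \cref{Duality:group:extension}. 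Its preimage $p^{-1}(H_0\cap H_1)$ is a finite-index subgroup of $G$ fitting in a short exact sequence $1\to F\to p^{-1}(H_0\cap H_1)\to H_0\cap H_1\to 1$ with $F$ finite; since $F$ is a (trivial, hence $0$-dimensional) duality group... — but $F$ is not torsion-free, so instead one uses that $p^{-1}(H_0\cap H_1)$ has the finite-index torsion-free subgroup $G_0\cap p^{-1}(H_1)$, which maps isomorphically onto a finite-index subgroup of the duality group $H_0\cap H_1$ and is therefore a duality group of dimension $m$ by \cref{Duality:group:extension}; hence $G$ contains a finite-index duality subgroup of dimension $m$, i.e.\ $G$ is a virtual duality group.

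The only mildly delicate point is bookkeeping with the various finite-index subgroups so that one always lands inside a torsion-free duality group before applying \cref{Duality:group:extension} (which requires torsion-freeness); the finiteness of $F$ is exactly what makes every torsion-free finite-index subgroup of $G$ embed via $p$ into $H$, and this is the structural fact that drives all three conclusions. I do not expect any genuine obstacle beyond assembling these pieces carefully.
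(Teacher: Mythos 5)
Your proof is correct and follows essentially the same route as the paper's: pass to a torsion-free finite-index subgroup of $G$, which $p$ maps isomorphically onto a finite-index subgroup of $H$ (it meets the finite kernel trivially), deduce the $\vcd$ and $\asd$ equalities from finite-index invariance, and for the duality statement intersect with a duality subgroup of $H$ and invoke \cref{Duality:group:extension}. The only cosmetic difference is that the paper applies \cref{Duality:group:extension} a second time to conclude that $p(L)$ itself is a duality group, whereas you stop at the smaller finite-index subgroup $G_0\cap p^{-1}(H_1)$; both versions are fine.
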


\begin{proof}
 Let $L$ be a torsion-free subgroup of $G$ of finite index. As $F$ is finite and $L$ is torsion-free, the restriction of the projection $p\vert_{L}: L\to p(L)$ is an isomorphism. Moreover, $p(L)$ is a finite-index subgroup of $H$. This establishes the first part, as $\vcd(G)=\cd(L)=\cd(p(L))=\vcd(H)$ and similarly for the asymptotic dimension. 

 For the second part, suppose $H$ is a virtual duality group and let $M\leq H$ be a duality subgroup of finite index. The intersection $p(L) \cap M$ is a finite-index subgroup of $M$, and thus it is a duality group by \cref{Duality:group:extension}. Since $p(L) \cap M$ is also a finite-index subgroup of the torsion-free group $p(L)$, applying \cref{Duality:group:extension} again implies that $p(L)\cong L$ is a duality group. Hence, $G$ is a virtual duality group.
\end{proof}

\subsection{Poly-surface groups}\label{Subsection:Poly-SurfaceGroups}

A group \( G \) is called a \emph{poly-surface group} (resp., \emph{poly-free group}) if there exists a finite filtration  
\[
1 = G_0 \lhd G_1 \lhd \cdots \lhd G_{n-1} \lhd G_n = G
\]
such that each quotient \( G_{i+1}/G_i \) is isomorphic to the fundamental group of a surface (resp., to a free group). If each $G_i$ is normal in $G$, we say that $G$ is a \emph{normally poly-surface group} (resp., \emph{normally poly-free group}). Furthermore, if each quotient \( G_{i+1}/G_i \) is the fundamental group of a finite-type surface (resp., a free group of finite rank), then \( G \) is a \emph{poly-finitely generated (poly-f.g.) surface group} (resp., \emph{poly-f.g. free group}). The \emph{length of \( G \)} is the minimal \( n \in \mathbb{N} \) for which such a filtration exists.\\

Typical examples include mapping tori of surfaces, poly-$\mathbb{Z}$ groups, and pure braid groups of surfaces with at least one puncture or non-empty boundary (see \cite{MR1797585} and \cref{braid:groups:infinite:type:are normally:polifree}).

It is a well-known fact that surface braid groups of finite type are poly-f.g. surface groups. For the sake of completeness, we provide a sketch of the proof.

\begin{proposition}\label{purebraid:are:polysurface}
Let \(\Sigma\) be a finite-type surface other than $\mathbb{S}^2$ and $\mathbb{RP}^2$. 

\begin{enumerate}
    \item If $\pi_1(\Sigma)$ is infinite, then for all $n\geq 1$, \(\pb_n(\Sigma)\) is a normally poly-surface group of length at most \(n\). 
    \item If $\Sigma$ is the plane $\mathbb{R}^2$ (the one-punctured sphere $\Sigma_{0,1}$) or the closed disk $\mathbb{D}^2$ (the surface $\Sigma_{0}^1$), then for all $n\geq 2$, \(\pb_n(\Sigma)\) is a normally poly-free group of length at most \(n-1\). 
\end{enumerate}
\end{proposition}

\begin{proof}
By \cref{surfacebraid:puntures:boundaries}, it suffices to consider the case where $\partial \Sigma = \emptyset$. 

For (1), we proceed by induction on $n$. The case $n=1$ is trivial as $\pb_1(\Sigma)=\pi_1(\Sigma)$. Assuming the claim holds for $n=k$, we consider the Fadell--Neuwirth exact sequence (\cref{Fadell:Neuwirth}):
\[
    1 \to \pi_1(\Sigma \setminus Q_k) \to \pb_{k+1}(\Sigma) \to \pb_k(\Sigma) \to 1.
    \]
Since $\pi_1(\Sigma \setminus Q_k)$ is a surface group of length one, by the inductive hypothesis and \cref{normmally:polisurfaces:ses}, $\pb_k(\Sigma)$ is a normally poly-surface group of length at most $k$.

For (2), let $\Sigma$ be the plane. For $n=2$, the Fadell-Neuwirth short exact sequence shows $\pb_{2}(\Sigma) \cong \pi_1(\Sigma \setminus Q_1)$ which is a free group (hence poly-free of length one). The inductive step follows the same logic as (1). 
\end{proof}

\section{Asymptotic dimension of poly-surfaces groups}\label{asym:poly:surfaces}
In this section, we compute the asymptotic dimension of torsion-free, poly-finitely generated (poly-f.g.) surface groups. The upper bound is established via induction on the length of the filtration, while the lower bound relies on showing that these groups are duality groups in the sense of Bieri--Eckmann.

The following results are well-known; we include their proofs for completeness and future reference.

\begin{lemma}\label{surfaces:groups:duality:groups}
Let \( \Sigma \) be a finite-type surface. If $\Sigma$ is the projective plane $\mathbb{RP}^2$, then \( \pi_1(\Sigma) \) is a virtual duality group of dimension $0$. Otherwise, \( \pi_1(\Sigma) \) is a duality group of dimension $m$, where
    \[
        m= 
        \begin{cases}
            2 & \text{if } \Sigma \text{ is closed and } \Sigma \neq \mathbb{S}^2; \\
            0 & \text{if } \Sigma \in \{\mathbb{S}^2, \mathbb{D}^2, \mathbb{R}^2\};\\
            1 & \text{otherwise}.
        \end{cases}
    \]
In particular $\vcd\pi_1(\Sigma))=m$.
\end{lemma}

\begin{proof}
    If $\pi_1(\Sigma)$ if finite, $\Sigma$ is either $\mathbb{RP}^2, \mathbb{S}^2, \mathbb{D}^2$ or $\mathbb{R}^2$. The fundamental group of $\mathbb{RP}^2$ is not trivial, making it a virtual duality group of dimension zero. In the other cases, $\pi_1(\Sigma)$ is trivial, hence a duality group of dimension zero. 

    Now, suppose $\pi_1(\Sigma)$ is infinite, which implies it is torsion-free. If $\Sigma$ is non-orientable, its orientable double-cover $\widetilde{\Sigma} \to \Sigma$ correspond to a subgroup $\pi_1(\widetilde{\Sigma}) \leq \pi_1(\Sigma)$ of index two. By \cref{Duality:group:extension}, it suffices to consider orientable surfaces. If $\Sigma$ is closed, $\pi_1(\Sigma)$ is a duality group of dimension $2$ by Poincaré duality. If $\Sigma$ is not closed, $\pi_1(\Sigma)$ is a free group of finite rank, and thus a duality group of dimension one \cite[Proposition 5.1]{Eckmann1973}.
\end{proof}

\begin{lemma}\label{asd:surfaces}
For any surface $\Sigma$, $\asd(\pi_1(\Sigma))=\vcd(\pi_1(\Sigma))$.
\end{lemma}

\begin{proof}
   Is \( \Sigma \) is of finite type and \( \pi_1(\Sigma) \) is finite, the claim is trivial. If \( \pi_1(\Sigma) \) is infinite and \( \Sigma \) is closed, \( \pi_1(\Sigma) \) is quasi-isometric to its universal cover, \( \mathbb{H}^2 \) or $\mathbb{R}^2$, via the Švarc--Milnor lemma; thus  \( \asd(\pi_1(\Sigma))=2 \). If \( \Sigma \) is of finite-type and not closed, \( \pi_1(\Sigma) \) is free and quasi-isometric to a tree, so \( \asd(\pi_1(\Sigma))=1 \). The result for finite-type surfaces then follows from \cref{surfaces:groups:duality:groups}.
   
   If \( \Sigma \) is of infinite type, \( \pi_1(\Sigma) \) is a free group of countable rank. So, every finitely generated subgroup \( \pi_1(\Sigma) \) is a free group of finite rank, and therefore we have $\asd(\pi_1(\Sigma)) = 1$. Similarly, $\cd(\pi_1(\Sigma))=1$ \cite[p. 185]{Br94}.
\end{proof}

\begin{theorem}\label{Thm:GeneralResult}
    Consider a short exact sequence $1\to K\to G \to H\to 1$. Suppose that $K$ is a duality group and $H$ is a virtual duality group, and both satisfy $\asdim=\vcd$. Then $G$ is a virtual duality group satisfying $\vcd(G)=\asdim(G)$.
\end{theorem}

\begin{proof}
    By \cref{Duality:group:s.e.s} and \cref{remark:cd-duality-group}, $G$ is a virtual duality group with $\vcd(K)+\vcd(H)=\vcd(G)$. The result follows from the chain of inequalities: \[\vcd(K)+\vcd(H)=\asdim(K)+\asdim(H)\geq \asdim(G) \geq \vcd(G).\]

    The first inequality is the Hurewicz-type formula (\cref{HurewiczFormula}), and the second follows from \cref{lemma:asdim:dual}.   
\end{proof}


\noindent{\emph{Proof of \cref{asim:dim:poly:surface:0}}}. We proceed by induction on \( n \). For \( n = 1 \), \( G \cong \pi_1(\Sigma)\) for a torsion-free finite-type surface $\Sigma$, and the result holds by \cref{surfaces:groups:duality:groups} and \cref{asd:surfaces}.

Assuming the result for \( n \), suppose \( G \) has length \( n+1 \) and consider the short exact sequence $1 \to G_n \to G \to G_{n+1}/G_n \to 1$. By the inductive hypothesis, \( G_n \) is a duality group with \( \asd(G_n)= \cd(G_n)\). Since \( G_{n+1}/G_n \) is a virtual duality group satisfying the same identity (again, by \cref{surfaces:groups:duality:groups} and \cref{asd:surfaces}), \cref{Thm:GeneralResult} implies $G$ is a virtual duality group with the required dimension. Since $G$ is torsion free, it is a duality group by \cref{Duality:group:extension}.\qed

\medskip

Given poly-free groups are torsion-free, in this context we can formulate \cref{asim:dim:poly:surface:0} as follows:

\begin{corollary}[Asymptotic dimension of poly-free groups]\label{asim:dim:poly:free}
    If $G$ is a poly-f.g.-free group of length $n$, then $\asd(G) = \cd(G) = n$.
    Consequently, every such filtration of $G$ has length $n$.
\end{corollary}


\section{Asymptotic dimension of finite-type surface braid groups}\label{basics:braid:groups}
This section is dedicated to the proofs of Theorems \ref{main:them:0} and \ref{thm:duality:braid:0}. We first establish that surface braid groups are (virtual) duality groups whose asymptotic dimension coincides with their (virtual) cohomological dimension; these results are presented in Theorems \ref{main:thm}, \ref{main:thm2}, and \ref{asd:sphere}. Subsequently, we provide an explicit computation of these dimensions for all finite-type surfaces.

\begin{theorem}\label{main:thm}
Let \(\Sigma\) be a finite-type surface other than $\mathbb{S}^2$ and $\mathbb{RP}^2$. For all $n\geq 1$, $\mathrm{B}_n(\Sigma)$ is a duality group and \[\asd(\mathrm{B}_n(\Sigma))=\cd(\mathrm{B}_n(\Sigma)).\]  
\end{theorem}

\begin{proof}
Since \( \mathrm{B}_n(\Sigma) \) is torsion-free for any surface \( \Sigma \notin \{\mathbb{S}^2, \mathbb{RP}^2 \}\), the result follows directly from the poly-surface structure established in \cref{purebraid:are:polysurface} combined with the dimension formula in \cref{asim:dim:poly:surface:0}.
\end{proof}

The following theorem addresses the braid groups of the closed disk $\mathbb{D}^2$ and the plane $\mathbb{R}^2$. Note that $\mathrm{B}_n(\mathbb{D}^2)$ is isomorphic to the classical Artin braid group $\mathrm{B}_n\cong \mathrm{B}_n(\mathbb{R}^2)$.

\begin{theorem}\label{main:thm2}
If $\Sigma$ is either $\mathbb{D}^2$ or $\mathbb{R}^2$ then $\mathrm{B}_n(\Sigma)$ is a duality group of dimension $n-1$. In particular, $\asd(\mathrm{B}_n(\Sigma))=\cd(\mathrm{B}_n(\Sigma))=n-1$. 
\end{theorem}

\begin{proof}
    We proceed by induction on $n$. For $n=1$, $\mathrm{B}_1(\Sigma)=\pi_1(\Sigma)$ is the trivial group and the claim holds. Assuming the result for $n=k$, consider the Fadell--Neuwirth short exact sequence (\cref{Fadell:Neuwirth}):
    \[
    1 \to \pi_1(\Sigma \setminus Q_k) \to \pb_{k+1}(\Sigma) \to \pb_k(\Sigma) \to 1.
    \]
    
    By the inductive hypothesis, $\pb_k(\Sigma)$ is a duality group of dimension $k-1$. Since $\pi_1(\Sigma \setminus Q_k)$  is a free group of finite rank (a duality group of dimension one), it follows from \cref{Duality:group:s.e.s} that $\pb_{k+1}(\Sigma)$ is a duality group of dimension $k$.
\end{proof}

For $\Sigma \in \{\mathbb{S}^2,\mathbb{RP}^2\}$, we show that the asymptotic dimension of $\mathrm{B}_n(\Sigma)$ matches the virtual cohomological dimension computed in \cite{maldonado:Guaschi:1}.    

\begin{theorem}\label{asd:sphere}
Let $\Sigma \in \{\mathbb{S}^2,\mathbb{RP}^2\}$ and $n \geq1$. Then $\mathrm{B}_n(\Sigma)$ is a virtual duality group and $\asd(\mathrm{B}_n(\Sigma))=\vcd(\mathrm{B}_n(\Sigma))$. Specifically, 

\begin{eqnarray*}
    \vcd(\mathrm{B}_n(\mathbb{S}^2))= \begin{cases}
    
    n-3 & \text{ if } n\geq3,\\
    
    0 & \text{ otherwise; } \\
   
    \end{cases} &   & \vcd(\mathrm{B}_n(\mathbb{RP}^2))=\begin{cases}
    
    n-2 & \text{ if } n\geq3,\\
    
    0 & \text{ otherwise.} \\
   
    \end{cases}
\end{eqnarray*}
\end{theorem}

\begin{proof}
If  \(\Sigma = \mathbb{S}^2\) (resp., $\mathbb{RP}^2$) and \(n \leq 3\) (resp., \(n \leq 2\)), $B_n(\Sigma)$ is finite and the claim is trivial.


For \(n \geq 3\) (resp., \(n \geq 2\) if $\Sigma=\mathbb{RP}^2$), we have the exact sequence \cite[Section 2.4]{MR3382024}: 
\begin{equation}\label{eq:1}
1 \to \mathbb{Z}_2 \to \mathrm{B}_n(\Sigma) \to \mcg(\Sigma\setminus Q_n) \to 1,
\end{equation}

By \cref{vcd:extension}, $\asd(\mathrm{B}_n(\Sigma))=\asd(\mcg(\Sigma-Q_n))$ and $\vcd(\mathrm{B}_n(\Sigma))=\vcd(\mcg(\Sigma\setminus Q_n)).$ For $\mathbb{RP}^2$, it is know $\vcd(\mcg(\mathbb{RP}^2 \setminus Q_3))=1$ \cite[Theorem~6.9]{I87}, which implies the group is virtually free and thus $\asd(\mcg(\mathbb{RP}^2 \setminus Q_3))=1$. A similar argument shows that $\vcd(\mcg(\mathbb{S}^2 \setminus Q_4))=1=\asd(\mcg(\mathbb{S}^2 \setminus Q_4))$. The general case follows by applying \cref{asd:mcg:Birman}.

The virtual duality property follows from \cref{eq:1} and the fact that mapping class groups are virtual duality groups \cite{harer1986, I87}. 
\end{proof}

\subsection{Cohomological dimension of braid groups of finite-type surfaces}\label{Secc:cd-SurfacesFT}
By Theorem \ref{main:thm}, surface braid groups (excluding those of the sphere and projective plane) are duality groups. In the following result, we specify their dimension. Recall that for a duality group, the cohomological dimension coincides with its dimension as a duality group (\cref{remark:cd-duality-group}).

\begin{theorem}[Surface braid groups are duality groups]\label{surfaces:braid:groups:are:duality:groups}
    Let \(\Sigma\) be a finite-type surface with infinite fundamental group and $n\geq 1$. Then \(\mathrm{B}_n(\Sigma) \) and $\pb_n(\Sigma)$ are duality groups of dimension $n+1$ if $\Sigma$ is closed, and $n$ otherwise. 
\end{theorem}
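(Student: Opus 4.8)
The plan is to induct on $n$ using the Fadell--Neuwirth short exact sequence from \cref{Fadell:Neuwirth} and the extension closure of duality groups established in \cref{Duality:group:s.e.s}. By \cref{surfacebraid:puntures:boundaries} we may assume $\partial\Sigma=\emptyset$, so $\Sigma=\Sigma_{g,k}$ with $\pi_1(\Sigma)$ infinite; note this forces $\Sigma\neq\Sigma_{-1}$, and since the fundamental group is infinite, $\pi_1(\Sigma)$ is a duality group (of dimension $2$ if $\Sigma$ is closed and $\neq\Sigma_0$, and of dimension $1$ otherwise) by \cref{surfaces:groups:duality:groups}. Since $\asd(\pb_n(\Sigma))=\asd(\mathrm{B}_n(\Sigma))$ and, by \cref{Duality:group:extension}, being a duality group of a given dimension passes between a torsion-free group and its finite-index subgroups, it suffices to treat $\pb_n(\Sigma)$.

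First I would handle the base case $n=1$: here $\pb_1(\Sigma)=\pi_1(\Sigma)$, which is a duality group of dimension $2$ if $\Sigma$ is closed and dimension $1$ otherwise, matching the claimed formula ($n+1=2$ in the closed case, $n=1$ otherwise). For the inductive step, assume $\pb_n(\Sigma)$ is a duality group of the asserted dimension $d_n$ (namely $d_n=n+1$ if $\Sigma$ closed, $d_n=n$ otherwise). Apply \cref{Fadell:Neuwirth} to get
\[
1 \to \pi_1(\Sigma - Q_n) \to \pb_{n+1}(\Sigma) \xrightarrow{\varphi} \pb_n(\Sigma) \to 1.
\]
The fiber $\Sigma - Q_n$ is a surface of finite type with at least $n\geq 1$ punctures removed, hence is non-closed with infinite (free, of positive rank) fundamental group, so by \cref{surfaces:groups:duality:groups} it is a duality group of dimension $1$. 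By \cref{Duality:group:s.e.s}, $\pb_{n+1}(\Sigma)$ is a duality group of dimension $d_n + 1$, which equals $n+2=(n+1)+1$ in the closed case and $n+1$ otherwise — exactly the claimed value for $\pb_{n+1}(\Sigma)$. This closes the induction, and transferring back to $\mathrm{B}_n(\Sigma)$ via \cref{Duality:group:extension} (using that $\mathrm{B}_n(\Sigma)$ is torsion-free and contains $\pb_n(\Sigma)$ as a finite-index subgroup) completes the proof.

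The only subtlety — and the point I would be most careful about — is making sure the hypotheses of \cref{Fadell:Neuwirth} and \cref{surfaces:groups:duality:groups} are genuinely met at every stage: one must check that $\Sigma-Q_n$ is never the sphere or projective plane (automatic, since removing a nonempty finite point set from a surface yields a non-closed surface) and that its fundamental group is genuinely infinite so that the dimension-$1$ clause of \cref{surfaces:groups:duality:groups} applies rather than the dimension-$0$ exceptional cases $\Sigma_0,\Sigma_0^1,\Sigma_{0,1}$; again this is automatic because $\Sigma-Q_n$ has at least one puncture and $\Sigma$ itself already has infinite $\pi_1$, so the fiber is a non-closed surface that is not a disk, hence has nontrivial free fundamental group. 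Everything else is a direct bookkeeping of dimensions through the extension, with the closed-versus-non-closed dichotomy entering only through the base case.
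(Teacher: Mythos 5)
Your proposal is correct and follows essentially the same route as the paper's proof: reduce to empty boundary via \cref{surfacebraid:puntures:boundaries}, pass to $\pb_n(\Sigma)$ using \cref{Duality:group:extension}, and induct on $n$ with the Fadell--Neuwirth sequence and \cref{Duality:group:s.e.s}, the closed/non-closed dichotomy entering only through the base case $\pb_1(\Sigma)=\pi_1(\Sigma)$ of dimension $2$ or $1$. The dimension bookkeeping and the check that the fiber $\pi_1(\Sigma-Q_n)$ is a free group of positive rank (hence a duality group of dimension one) match the paper's argument.
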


\begin{proof}
By \cref{surfacebraid:puntures:boundaries}, we need only consider surfaces with empty boundary. Furthermore, since $\mathrm{B}_n(\Sigma)$ is torsion free and $\pb_n(\Sigma)$ is a finite-index subgroup, \cref{Duality:group:extension} allows us to focus on the pure braid group.

If \(\Sigma\) has at least one puncture, we proceed by induction on \(n\). For \(n = 1\), \(\pb_1(\Sigma) = \pi_1(\Sigma)\) is a free group of positive rank, hence a duality group of dimension one. Assuming the claim for \(n\), the Fadell--Neuwirth exact sequence (\cref{Fadell:Neuwirth})
    \[
    1 \to \pi_1(\Sigma - Q_n) \to \pb_{n+1}(\Sigma) \to \pb_n(\Sigma) \to 1,
    \]
    combined with \cref{Duality:group:s.e.s} shows that \(\pb_{n+1}(\Sigma)\) is a duality group of dimension \(n + 1\)

    The case where $\Sigma$ is closed follows similarly, using the base case $n=1$ where $\pi_1(\Sigma)$ is a duality group of dimension two. 
\end{proof}

\section{Asymptotic and cohomological dimension of infinite-type surface braid groups}\label{Secc:cd-SurfacesIT}

In \cref{Secc:cd-SurfacesFT}, we computed the cohomological dimension of finite-type surface braid groups by leveraging their structure as virtual duality groups. However, braid groups of infinite-type surfaces are not duality groups, as the latter must be of type 
 and, consequently, finitely generated (see \cref{thm:DualitythenFP} and \cref{braid:groups:infinite:type:are normally:polifree}). In this section, we establish that for any infinite-type surface $\Sigma$ and every $n\geq 1$, $\asd(\pb_n(\Sigma))=\cd(\pb_n(\Sigma))$. The proof relies on our previous computations for the finite-type case and the fact that $\mathrm{PB}_n(\Sigma)$ is a normally poly-free group of length $n$.\\

Let \( M \) be a surface and \( N \subseteq M \) a subsurface such that every boundary component of \( N \) is either a boundary component of \( M \) or is contained in \( \mathrm{int}(M) \). For \( p \in N \), the inclusion \( i \colon N \hookrightarrow M \) induces a homomorphism $\psi \colon \pi_1(N, p) \to \pi_1(M, p).$ The following results extend Propositions 2.1 and 2.2 of \cite{Paris:Rolfsen} to the general topological setting.

\begin{proposition}\label{subsurface:funadamental:group}
  Let $M$ be a surface and \( N\subseteq M \) be a subsurface with \( \pi_1(N, p) \neq 1 \). The homomorphism \( \psi\) is injective if and only if no connected component of \( M \setminus N \) is a disk.   
\end{proposition}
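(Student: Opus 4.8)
The plan is to reduce to the statement about $\pi_1$ of a graph of spaces and argue directly from the structure of $M$ as built from $N$ together with the components of $M - N$ (more precisely, the closure of the complement). First I would set up the decomposition: let $N' = M - \mathrm{int}(N)$, so $M = N \cup N'$ glued along $\partial N \cap \mathrm{int}(M)$, which is a disjoint union of circles (and possibly arcs landing on $\partial M$, but by the hypothesis on boundary components of $N$ these are circles in $\mathrm{int}(M)$). This realizes $\pi_1(M,p)$ as the fundamental group of a graph of groups whose vertex groups are $\pi_1(N)$ and the $\pi_1$ of the components of $N'$, and whose edge groups are the $\pi_1 \cong \mathbb{Z}$ of the gluing circles (or trivial, for a circle bounding a disk). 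By van Kampen / Bass--Serre theory, $\psi\colon\pi_1(N,p)\to\pi_1(M,p)$ is the canonical map of a vertex group into the fundamental group of the graph of groups.

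The key dichotomy is then: $\psi$ fails to be injective precisely when some edge group maps non-injectively into the vertex group $\pi_1(N)$, i.e. when some gluing circle $c\subseteq\partial N$ is null-homotopic in $N$. The forward-looking heart of the argument is to show that a boundary circle $c$ of $N$ is null-homotopic in $N$ if and only if $c$ bounds a disk in $N$ (this uses that $N$ is a surface and $c$ is an embedded circle — an essential circle on a surface is never null-homotopic, a standard consequence of the classification of surfaces or of the fact that surface groups and free groups have no such relations), and correspondingly that the component of $N'$ glued along $c$ is a disk iff... — wait, here I must be careful about which side the disk is on. The statement says "no connected component of $M-N$ is a disk", so I want: $\psi$ is injective iff no component of $M-N$ is a disk. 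If a component $D$ of $M - N$ is a disk, then its boundary circle $\partial D \subseteq \partial N$ bounds $D$ in $M$; capping it off shows $\partial D$ is null-homotopic in $M$, and since $\pi_1(N)\neq 1$ one checks that a boundary circle of $N$ is non-trivial in $\pi_1(N)$ unless it bounds a disk in $N$ — but $D$ being a component of $M - N$ means $D$ is on the $N'$ side, so $\partial D$ need not bound a disk in $N$, hence $\psi$ kills a nontrivial element. Conversely, if no component of $M-N$ is a disk, then every edge group injects into every incident vertex group, so by the normal form / Bass--Serre theory the vertex groups inject into $\pi_1(M)$, giving injectivity of $\psi$.

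For the converse direction more carefully: I would argue the contrapositive. Suppose $\psi$ is not injective, so there is a simple closed curve (or a nontrivial element, but a standard innermost-disk argument on a null-homotopy, using that $M$ is a surface, lets us take it to be a boundary circle $c$ of $N$) that is essential in $N$ but null-homotopic in $M$. A null-homotopy of an essential simple closed curve in a surface is realized by an embedded disk (Dehn's lemma is trivial in dimension $2$; alternatively use that the curve, being null-homotopic, bounds on one side after compressing), and that disk lies on the $N'$-side since $c$ is essential in $N$ — this produces a disk component of $M-N$. The main obstacle I expect is handling the boundary/puncture bookkeeping in full topological generality (infinite-type surfaces, where one cannot quote the classification theorem directly): the safe route is to phrase everything in terms of the algebraic statement — a boundary circle of $N$ is trivial in $\pi_1(N)$ iff it bounds a disk, and the van Kampen decomposition — since these facts hold for arbitrary surfaces, finite-type or not. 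Following Paris--Rolfsen's argument essentially verbatim, but replacing any appeal to finiteness with the observation that $\pi_1$ of a surface is locally free or a surface group and such groups have no "accidental" relations among embedded curves, should complete the proof.
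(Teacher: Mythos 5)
The paper does not actually prove this proposition: it states it and refers to Paris--Rolfsen (their Proposition~2.1, stated for finite-type surfaces), remarking only that the statement persists for arbitrary topological surfaces. So your proposal is not ``the same proof'' or ``a different proof'' of something proved in the text --- it supplies an argument where the paper gives a citation. Your route (view $M$ as a graph of spaces with vertex pieces $N$ and the closures of the components of $M-N$, edges the finitely many boundary circles of $N$ lying in $\mathrm{int}(M)$, then apply van Kampen and the Bass--Serre normal form) is legitimate and has the merit the paper needs: it works verbatim when $M$ or $N$ has infinite type, since compactness of $\partial N$ gives finitely many gluing circles and neither van Kampen nor Bass--Serre theory requires finite type.

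Two points need tightening. First, in the forward direction you write that $\partial D$ ``need not bound a disk in $N$''; you need that it \emph{cannot}: since surfaces here are connected, a boundary circle of $N$ bounding a disk in $N$ forces $N$ to equal that disk, contradicting $\pi_1(N,p)\neq 1$. This is also exactly what makes every edge group inject into the $N$-side vertex group in the converse direction, so it is where the hypothesis $\pi_1(N,p)\neq 1$ enters on both sides. Second, the parenthetical that an edge group may be ``trivial, for a circle bounding a disk'' is off: each edge group is infinite cyclic; what can fail is injectivity of the edge-to-vertex map. With these corrections, and the standard fact (valid for all surfaces via compact exhaustion) that a null-homotopic embedded circle bounds an embedded disk, your outline closes; the innermost-disk digression in the middle is not needed once the Bass--Serre direction is in place.
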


For $n\geq 1$ and $Q_n  \in F_n(N)$, the inclusion \( N \hookrightarrow M \) induces a homomorphism \[
\psi_n \colon \mathrm{PB}_n(N) \to \mathrm{PB}_n(M).
\]
\begin{proposition}\label{subsurface:pure:braid:groups}
Let $M$ be a surface other than $\mathbb{S}^2$ and $\mathbb{RP}^2$, and  let \( N\subseteq M \) be a subsurface such that no connected component of \( M \setminus  N \) is a disk. Then $\psi_n$ is injective for all $n \geq 1$.
\end{proposition}

\begin{proof}
 The proof is by induction on \( n \). The case \( n = 1 \) follows from \cref{subsurface:funadamental:group}. Assuming the result for \( k = n - 1 \), the Fadell--Neuwirth exact sequence yields the following commutative diagram: 
\[
\begin{tikzcd}
1 \arrow[r] & \pi_1(N \setminus Q_{n-1}) \arrow[r] \arrow[d, "\varphi"] & \mathrm{PB}_n(N) \arrow[r] \arrow[d, "\psi_n"] & \mathrm{PB}_{n-1}(N) \arrow[r] \arrow[d, "\psi_{n-1}"] & 1 \\
1 \arrow[r] & \pi_1(M \setminus Q_{n-1}) \arrow[r] & \mathrm{PB}_n(M) \arrow[r] & \mathrm{PB}_{n-1}(M) \arrow[r] & 1
\end{tikzcd}
\]

The map $\varphi$ is injective by the hypothesis on $N$, and \( \psi_{n-1} \) is injective by the induction hypothesis. The injectivity of \( \psi_n \) follows from Five lemma.  
\end{proof}

\begin{theorem}\label{cd:infinite:case:brain}
    Let $\Sigma$ be an infinite-type surface and $n\geq 1$. Then $$\gd(\mathrm{PB}_n(\Sigma))=\cd(\mathrm{PB}_n(\Sigma))=n.$$
\end{theorem}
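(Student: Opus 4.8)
The plan is to establish the two equalities $\gd(\mathrm{PB}_n(\Sigma)) = \cd(\mathrm{PB}_n(\Sigma))$ and $\cd(\mathrm{PB}_n(\Sigma)) = n$ by sandwiching. Since $\mathrm{PB}_n(\Sigma)$ is torsion-free (as $\Sigma$ is neither the sphere nor the projective plane) and since for any group one always has $\cd(G) \leq \gd(G)$ with equality whenever $\cd(G) \geq 3$ by the Eilenberg--Ganea theorem---and here $n$ will turn out to be the relevant number, with the small cases $n=1,2$ handled separately since $\mathrm{PB}_n(\Sigma)$ is then free or an iterated extension of free groups of known geometric dimension---it suffices to prove $\cd(\mathrm{PB}_n(\Sigma)) = n$.

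For the \emph{upper bound} $\cd(\mathrm{PB}_n(\Sigma)) \leq n$, I would iterate the Fadell--Neuwirth short exact sequence of \cref{Fadell:Neuwirth}: each successive quotient $\pi_1(\Sigma - Q_k)$ is the fundamental group of an infinite-type surface, hence a free group, which has cohomological dimension $1$. Subadditivity of cohomological dimension under extensions (i.e.\ $\cd(G) \leq \cd(K) + \cd(H)$ for $1 \to K \to G \to H \to 1$; see \cite{Br94}) then gives $\cd(\mathrm{PB}_n(\Sigma)) \leq n$ by induction on $n$, with the base case $\cd(\mathrm{PB}_1(\Sigma)) = \cd(\pi_1(\Sigma)) = 1$. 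Equivalently, this follows from part (b) of \cref{infinite:type:normally:poly:free} once it is established that $\mathrm{PB}_n(\Sigma)$ is normally poly-free of length $n$, together with the general fact that a poly-free group of length $n$ has cohomological dimension at most $n$.

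For the \emph{lower bound} $\cd(\mathrm{PB}_n(\Sigma)) \geq n$, the key is that any infinite-type surface $\Sigma$ contains a finite-type subsurface $N$ with no connected component of $\Sigma - N$ a disk and with $\cd(\mathrm{PB}_n(N)) = n$: for instance one can take $N$ to be (homotopy equivalent to) a surface with at least two punctures, or a one-holed torus, so that by \cref{surfaces:braid:groups:are:duality:groups} (or \cref{vcd:surfaces:braid:groups}) one has $\cd(\mathrm{PB}_n(N)) = n$. By \cref{subsurface:pure:braid:groups}, the inclusion $N \hookrightarrow \Sigma$ induces an injection $\mathrm{PB}_n(N) \hookrightarrow \mathrm{PB}_n(\Sigma)$, and cohomological dimension is monotone under passage to subgroups, so $\cd(\mathrm{PB}_n(\Sigma)) \geq \cd(\mathrm{PB}_n(N)) = n$. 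Combining the two bounds gives $\cd(\mathrm{PB}_n(\Sigma)) = n$, and then $\gd(\mathrm{PB}_n(\Sigma)) = n$ follows from Eilenberg--Ganea in the range $n \geq 3$ and by inspection (building an explicit $n$-dimensional model as an iterated fibration of graphs/surfaces, or invoking $\gd \leq n$ directly for poly-free groups of length $n$) for $n \leq 2$.

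\smallskip
The main obstacle I anticipate is the lower bound: one must exhibit a finite-type subsurface $N \subseteq \Sigma$ that is simultaneously \emph{large enough} that $\cd(\mathrm{PB}_n(N)) = n$ (which, by the $\vcd$ table, rules out $N$ being a disk or a once-punctured disk, where the dimension drops to $n-1$) and \emph{cleanly embedded} in the sense required by \cref{subsurface:pure:braid:groups}, namely that no complementary component is a disk. For an infinite-type surface this is always possible---since such a $\Sigma$ has infinitely generated $\pi_1$, it contains, say, a twice-punctured torus or a thrice-punctured sphere as an incompressible subsurface---but care is needed to phrase the choice of $N$ uniformly across all infinite-type surfaces (orientable or not, with or without boundary). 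Once $N$ is fixed, the rest is a routine application of the monotonicity of $\cd$ and the already-established duality-group computation for finite-type braid groups.
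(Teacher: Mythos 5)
Your proposal is correct and its core coincides with the paper's argument: the lower bound is obtained exactly as in the paper, by embedding $\mathrm{PB}_n(N)$ for a sufficiently complex finite-type subsurface $N$ with no complementary disk components (\cref{subsurface:pure:braid:groups}) and using monotonicity of $\cd$ together with the finite-type computation (\cref{vcd:surfaces:braid:groups}), and the upper bound comes from iterating the Fadell--Neuwirth sequence with free-group kernels. The only divergence is how the geometric dimension is recovered: the paper bounds $\gd(\mathrm{PB}_n(\Sigma))\leq n$ directly, applying L\"uck's extension bound (\cite[Theorem 5.15]{Lu05}) inductively along \cref{Fadell:Neuwirth}, and then uses $\cd\leq\gd$; you instead work at the level of $\cd$ (subadditivity under extensions) and pull $\gd$ back via Eilenberg--Ganea for $n\geq 3$ and Stallings--Swan for $n=1$. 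Note that for $n=2$ Eilenberg--Ganea alone only gives $\gd\leq 3$ (the equality $\cd=2\Rightarrow\gd=2$ is the open Eilenberg--Ganea problem), so your fallback there --- an explicit $2$-dimensional model from the extension of free groups, i.e.\ the bound $\gd\leq n$ for poly-free groups of length $n$ --- is genuinely needed; since that bound handles all $n$ at once, it is in fact the cleaner route and is precisely what the paper does. With that caveat addressed, as you do, both approaches are complete, and yours buys nothing extra beyond avoiding the explicit citation of L\"uck's theorem in the range $n\geq 3$.
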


\begin{proof}
It suffices to show that $n\geq \gd(\mathrm{PB}_n(\Sigma))\geq\cd(\mathrm{PB}_n(\Sigma))\geq n.$

  For the first inequality, we use induction. For $n=1$, $\mathrm{PB}_1(\Sigma)=\pi_1(\Sigma)$ is a free group of infinite rank, so $\gd(\pb_1(\Sigma))= \cd (\pb_1(\Sigma)) =1$. Assuming the claim for all $n<k$, consider the Fadell--Neuwirth exact sequence (\cref{Fadell:Neuwirth}): 
\[
1 \to \pi_1(\Sigma \setminus Q_{k-1}) \to \pb_{k}(\Sigma) \xrightarrow{\varphi} \pb_{k-1}(\Sigma) \to 1.
\]
Since $\pb_{k-1}(\Sigma)$ is torsion-free, Theorem 5.15 in \cite{Lu05} implies  $$\gd(\pb_{k}(\Sigma))\leq \gd(\pi_1(\Sigma \setminus Q_{k-1}))+ \gd(\pb_{k-1}(\Sigma))\leq k;$$ the last inequality is by induction hypothesis and since $\pi_1(\Sigma \setminus Q_{k-1})$ is a free group. 

The inequality $\gd(\mathrm{PB}_n(\Sigma))\geq\cd(\mathrm{PB}_n(\Sigma))$ is standard. For $\cd(\mathrm{PB}_n(\Sigma))\geq n$, let $N\subseteq M$ be a finite-type subsurface with sufficiently high topological complexity such that $\psi_n$ is injective. By monotonicity $\cd$ of the cohomological dimension and \cref{surfaces:braid:groups:are:duality:groups}, $\cd(\mathrm{PB}_n(\Sigma))\geq \cd(\mathrm{PB}_n(N))=n$. 
\end{proof}

Since the pure braid group is a finite-index subgroup of the braid group and the latter has finite cohomological dimension, \cref{cd:infinite:case:brain} yields the following:

\begin{corollary}
Let $\Sigma$ be an infinite-type surface and $n \geq 1$. Then $$\cd(\mathrm{B}_n(\Sigma))=n.$$
\end{corollary}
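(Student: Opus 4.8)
The plan is to obtain this as a direct consequence of \cref{cd:infinite:case:brain} via the standard behaviour of cohomological dimension under a finite-index inclusion. First I would recall the short exact sequence
\[
1 \to \pb_n(\Sigma) \to \mathrm{B}_n(\Sigma) \to \frak{S}_n \to 1,
\]
which exhibits $\pb_n(\Sigma)$ as a normal subgroup of finite index $n!$ in $\mathrm{B}_n(\Sigma)$. Combined with monotonicity of cohomological dimension, this already gives the inequality $\cd(\pb_n(\Sigma)) \leq \cd(\mathrm{B}_n(\Sigma))$; the point is to upgrade it to an equality.

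Next I would observe that $\mathrm{B}_n(\Sigma)$ is torsion-free. Since $\Sigma$ is of infinite type it is, in particular, neither the sphere nor the projective plane, and the braid group of any such surface is torsion-free, as recorded in the introduction. The final step is then to invoke Serre's theorem (see \cite[Chapter VIII]{Br94}): for a torsion-free group, any finite-index subgroup has the same cohomological dimension, provided that dimension is finite. By \cref{cd:infinite:case:brain} we have $\cd(\pb_n(\Sigma)) = n < \infty$, so applying Serre's theorem to the inclusion $\pb_n(\Sigma) \leq \mathrm{B}_n(\Sigma)$ yields
\[
\cd(\mathrm{B}_n(\Sigma)) = \cd(\pb_n(\Sigma)) = n.
\]

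I expect no genuine obstacle here, as all the substantive work has already been carried out in \cref{cd:infinite:case:brain}. The only two hypotheses demanding attention are exactly the ones that make Serre's theorem applicable: the torsion-freeness of $\mathrm{B}_n(\Sigma)$, and the finiteness of $\cd(\pb_n(\Sigma))$. The latter is what guarantees that $\mathrm{B}_n(\Sigma)$ itself has finite cohomological dimension and that the monotonicity inequality above is in fact sharp, rather than leaving open the possibility of a strict jump when passing to the larger group.
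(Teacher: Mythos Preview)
Your argument is correct and is essentially the same as the paper's: the paper deduces the corollary in one line from \cref{cd:infinite:case:brain} using that $\pb_n(\Sigma)$ has finite index in the torsion-free group $\mathrm{B}_n(\Sigma)$, which is exactly Serre's theorem as you spell out. Your version is simply more explicit about why the hypotheses of Serre's theorem are satisfied.
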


To show that the asymptotic dimension of $\pb_n(\Sigma)$ coincides with its cohomological dimension for any infinite-type surface $\Sigma$,  we require the following structural result.

\begin{proposition}\label{braid:groups:infinite:type:are normally:polifree}
    Let $\Sigma$ be an infinite-type surface and $n \geq 1$. Then
    \begin{enumerate}[a)]
        \item  $\pb_n(\Sigma)$ is normally poly-free of length  $n$.
        \item $\pb_n(\Sigma)$ and $\mathrm{B}_n(\Sigma)$ are countable and infinitely generated.
    \end{enumerate}
\end{proposition}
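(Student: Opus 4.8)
The plan is to prove both parts by exploiting the Fadell–Neuwirth fibration iteratively, combined with the results on subsurfaces already established in Propositions \ref{subsurface:funadamental:group} and \ref{subsurface:pure:braid:groups}, and the theorems about poly-surface groups from \cref{asym:poly:surfaces}. For part a), I would proceed by induction on $n$. For $n=1$ we have $\pb_1(\Sigma)=\pi_1(\Sigma)$, which is a free group (of infinite rank, since $\Sigma$ is of infinite type), hence trivially normally poly-free of length $1$. For the inductive step, I would like to invoke the Fadell–Neuwirth short exact sequence $1\to \pi_1(\Sigma-Q_{n-1})\to \pb_n(\Sigma)\to \pb_{n-1}(\Sigma)\to 1$; here $\pi_1(\Sigma-Q_{n-1})$ is free and $\pb_{n-1}(\Sigma)$ is, by induction, normally poly-free of length $n-1$. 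Applying the version of \cref{normmally:polisurfaces:ses} for poly-free groups (the analogue statement follows verbatim, replacing \emph{surface} by \emph{free}), we conclude $\pb_n(\Sigma)$ is normally poly-free of length at most $n$. To see the length is \emph{exactly} $n$ and not smaller, I would use the lower bound on cohomological dimension: if $\pb_n(\Sigma)$ were poly-free of length $m<n$, then by \cref{asim:dim:poly:free} (or directly since each free quotient has cohomological dimension $\leq 1$) we would have $\cd(\pb_n(\Sigma))\leq m<n$, contradicting \cref{cd:infinite:case:brain}.

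For part b), countability is straightforward: $\Sigma$ being a surface of infinite type, its fundamental group $\pi_1(\Sigma)$ is a free group on a countable generating set, so $F_n(\Sigma)$ has countable fundamental group (one can also see this by iterating the Fadell–Neuwirth sequence, each kernel $\pi_1(\Sigma-Q_k)$ being countable and countability being closed under extensions). Hence $\pb_n(\Sigma)$ is countable, and since $\mathrm{B}_n(\Sigma)$ contains $\pb_n(\Sigma)$ with index $n!$, it is countable too. For the infinite-generation claim, I would argue that $\pb_n(\Sigma)$ is not finitely generated: if it were, then being poly-free of finite length it would be of type $\mathrm{FP}$, hence a group of type $\mathrm{FP}_\infty$; more concretely, one can extract from the Fadell–Neuwirth sequence that $\pb_n(\Sigma)$ surjects onto $\pb_1(\Sigma)=\pi_1(\Sigma)$, which is a free group of infinite rank and hence not finitely generated — and a quotient of a finitely generated group is finitely generated, giving a contradiction. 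Since $\pb_n(\Sigma)$ is an index-$n!$ subgroup of $\mathrm{B}_n(\Sigma)$, and a finitely generated group has only finitely generated finite-index subgroups (indeed, finite-index subgroups of f.g.\ groups are f.g.), $\mathrm{B}_n(\Sigma)$ is not finitely generated either.

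The main obstacle I anticipate is justifying the lower bound on the length in part a) cleanly, i.e., showing that $n$ filtration steps are genuinely needed rather than merely sufficient. The cohomological-dimension argument via \cref{cd:infinite:case:brain} handles this, but one must be careful that \cref{cd:infinite:case:brain} is available at this point in the paper (it is, having been proved just above) and that the inequality $\cd(G)\leq \mathrm{length}(G)$ for poly-free $G$ is genuinely used — this is immediate from the long exact (co)homology sequence of an extension, since $\cd$ is subadditive under extensions and each free factor contributes at most $1$. A minor subtlety is that \cref{normmally:polisurfaces:ses} is stated for poly-\emph{surface} groups; I would either cite it as applying to poly-free groups after the obvious verbatim modification, or re-derive the filtration directly from \cite[Theorem 5.11 and Corollary 5.8]{MR600654} applied to the Fadell–Neuwirth projection, noting that the preimages $p^{-1}(H_i)$ of a normal filtration of $\pb_{n-1}(\Sigma)$ are normal in $\pb_n(\Sigma)$ and that adjoining the free kernel at the bottom keeps everything normal.
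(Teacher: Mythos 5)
Your proof is correct and follows essentially the same route as the paper: induction via the Fadell--Neuwirth sequence together with \cref{normmally:polisurfaces:ses} for the upper bound on the length, a contradiction with \cref{cd:infinite:case:brain} for the lower bound (where you use subadditivity of $\cd$ under extensions in place of the paper's citation of \cite[Theorem 1.1]{jimenezleon}), and the standard iterated forgetful-map argument for part b), which the paper leaves implicit. One small caveat: your parenthetical claim that a finitely generated poly-free group would be of type $\mathrm{FP}$ is not justified, but it is dispensable, since your concrete argument that $\pb_n(\Sigma)$ surjects onto the infinite-rank free group $\pi_1(\Sigma)$ already gives infinite generation.
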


\begin{proof} For item \emph{a)}, we first show by induction on $n$ that $\pb_n(\Sigma)$ is normally poly-free of length at most $n$. The case $n = 1$ is clear, as $\pb_1(\Sigma) = \pi_1(\Sigma)$ is a free group of infinite rank. Assuming the claim for all $k < n$, consider the Fadell--Neuwirth short exact sequence:
    \[
1 \to \pi_1(\Sigma \setminus Q_{n-1}) \to \pb_n(\Sigma) \xrightarrow{\varphi} \pb_{n-1}(\Sigma) \to 1.
\]

By the inductive hypothesis and \cref{normmally:polisurfaces:ses}, $\pb_n(\Sigma)$ is normally poly-free of length at most $n$. If its length were strictly less than $n$, then by \cite[Theorem 1.1(a)]{jimenezleon}, we would have $\cd(\pb_n(\Sigma)) \leq\gd(\pb_n(\Sigma)) < n$, which contradicts \cref{cd:infinite:case:brain}. Thus, $\pb_n(\Sigma)$ has length exactly $n$.

Item \emph{b)} follows by a similar induction using the same exact sequence, noting that the kernel and quotient are countable and infinitely generated at each step.
\end{proof}

\begin{remark}
Combining \cref{braid:groups:infinite:type:are normally:polifree} with the work of Jiménez--Rolland and the first author \cite[Theorem 1.1]{jimenezleon}, it follows that the virtually cyclic dimension of $\pb_n(\Sigma)$ is bounded above by $3n-1$.
\end{remark}

\begin{theorem}\label{thm:asdimInfinite-typeSurfaces}
Let $\Sigma$ be an infinite-type surface and $n \geq 1$. Then $$\asd(\pb_n(\Sigma))=\asd(\mathrm{B}_n(\Sigma))=n.$$
\end{theorem}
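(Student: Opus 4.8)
The plan is to show $\asd(\pb_n(\Sigma)) = n$ and to observe that this number also equals $\asd(\mathrm{B}_n(\Sigma))$, proving the two estimates $\asd(\pb_n(\Sigma)) \le n$ and $\asd(\pb_n(\Sigma)) \ge n$ separately. Since $\pb_n(\Sigma)$ sits inside $\mathrm{B}_n(\Sigma)$ with index $n!$, the two asymptotic dimensions coincide: monotonicity of $\asd$ under subgroups (immediate from the definition of $\asd$ for countable groups as a supremum over finitely generated subgroups) gives $\asd(\pb_n(\Sigma)) \le \asd(\mathrm{B}_n(\Sigma))$, and conversely any finitely generated $K \le \mathrm{B}_n(\Sigma)$ meets $\pb_n(\Sigma)$ in a finite-index, hence finitely generated, subgroup with $\asd(K) = \asd(K \cap \pb_n(\Sigma)) \le \asd(\pb_n(\Sigma))$ by the classical finite-index invariance of $\asd$ for finitely generated groups; taking the supremum over $K$ gives the reverse inequality.

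For the upper bound I would combine \cref{braid:groups:infinite:type:are normally:polifree}, which asserts that $\pb_n(\Sigma)$ is poly-free of length $n$, with \cref{asim:dim:poly:free}, which bounds the asymptotic dimension of any poly-free group of length $n$ by $n$. This last bound carries no finite-generation hypothesis: it is proved by induction on the length using the Dranishnikov--Smith subadditivity estimate \cite[Theorem 2.3]{A2006}, which is valid for all countable groups. Hence $\asd(\pb_n(\Sigma)) \le n$.

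For the lower bound I would use subgroup monotonicity of $\asd$, in the spirit of the proof of \cref{cd:infinite:case:brain}. As $\Sigma$ has infinite type, it contains a finite-type subsurface $N$ with $\pi_1(N) \ne 1$ and with no connected component of $\Sigma - N$ a disk; for instance a closed regular neighborhood of an essential simple closed curve in $\Sigma$ is such an $N$, homeomorphic to $\Sigma_0^2$. By \cref{subsurface:pure:braid:groups} the inclusion $N \hookrightarrow \Sigma$ induces an injection $\pb_n(N) \hookrightarrow \pb_n(\Sigma)$, so $\asd(\pb_n(\Sigma)) \ge \asd(\pb_n(N))$. Taking $N$ so that $\vcd(\pb_n(N)) = n$ --- which by \cref{vcd:surfaces:braid:groups} already holds for $N = \Sigma_0^2$ and for many other choices --- and applying \cref{thm:Asdim=vcd} (together with the finite-index invariance used above) to identify $\asd(\pb_n(N)) = \vcd(\pb_n(N)) = n$, we conclude $\asd(\pb_n(\Sigma)) \ge n$. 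Combining with the upper bound gives $\asd(\pb_n(\Sigma)) = \asd(\mathrm{B}_n(\Sigma)) = n$.

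The step I expect to require the most care is the lower bound: one must exhibit a finite-type subsurface $N \subseteq \Sigma$ that is simultaneously rich enough that $\vcd(\pb_n(N)) = n$ and positioned so that no complementary component is a disk, so that \cref{subsurface:pure:braid:groups} yields the embedding of pure braid groups. Once such an $N$ is in hand, the remaining ingredients --- the finite-index reduction from $\mathrm{B}_n$ to $\pb_n$, the poly-free upper bound, and the passage from $\asd$ to $\vcd$ on the finite-type subsurface via \cref{thm:Asdim=vcd} --- are essentially formal.
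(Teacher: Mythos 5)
Your proposal is correct and follows essentially the same route as the paper: reduce to $\pb_n(\Sigma)$ by finite index, get the upper bound from \cref{braid:groups:infinite:type:are normally:polifree} together with \cref{asim:dim:poly:free}, and get the lower bound by embedding $\pb_n(N)$ for a suitable finite-type subsurface $N$ via \cref{subsurface:pure:braid:groups}, monotonicity of $\asd$, and the finite-type computation (\cref{vcd:surfaces:braid:groups}, \cref{main:thm}/\cref{thm:Asdim=vcd}). Your extra details --- the supremum-over-finitely-generated-subgroups handling of the finite-index step and the explicit choice $N \cong \Sigma_0^2$, for which the table indeed gives $\vcd(\pb_n(N)) = n$ --- are fine and only make explicit what the paper leaves implicit.
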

\begin{proof}
Since the asymptotic dimension is a quasi-isometric invariant and $\pb_n(\Sigma)$ has finite index in $\mathrm{B}_n(\Sigma)$, it suffices to show that $\asd(\pb_n(\Sigma))=n$. The upper bound $\asd(\pb_n(\Sigma)) \leq n$ follows from \cref{asim:dim:poly:free}, given that $\pb_n(\Sigma)$ is poly-free of length $n$. For the lower bound, let $N\subseteq \Sigma$ be a finite-type subsurface satisfying the condition of \cref{subsurface:pure:braid:groups}. By monotonicity of the asymptotic dimension for countable groups, we have $\asd(\pb_n(\Sigma))\geq \asd(\pb_n(N))$. The result then follows from \cref{thm:duality:braid:0} and \cref{main:thm}.
\end{proof}

\bibliographystyle{alpha} 
\bibliography{mybib}

@article{Aramayona-Vlamis2020,
  title={Big mapping class groups: an overview},
  author={Aramayona, Javier and Vlamis, Nicholas G},
  journal={in the tradition of Thurston: geometry and topology},
  pages={459--496},
  year={2020},
  publisher={Springer}
}

@article {Szepietowski2008,
    AUTHOR = {Szepietowski, B{\l}a{\.z}ej},
     TITLE = {A presentation for the mapping class group of a non-orientable
              surface from the action on the complex of curves},
   JOURNAL = {Osaka J. Math.},
  FJOURNAL = {Osaka Journal of Mathematics},
    VOLUME = {45},
      YEAR = {2008},
    NUMBER = {2},
     PAGES = {283--326},
      ISSN = {0030-6126},
   MRCLASS = {57N05 (20F38 57M50)},
  MRNUMBER = {2441942},
MRREVIEWER = {Andrew\ Putman},
       URL = {http://projecteuclid.org.pbidi.unam.mx:8080/euclid.ojm/1216151101},
}

@article{harer1986,
  title={The virtual cohomological dimension of the mapping class group of an orientable surface},
  author={Harer, John L},
  journal={Inventiones mathematicae},
  volume={84},
  number={1},
  pages={157--176},
  year={1986},
  publisher={Springer-Verlag Berlin/Heidelberg}
}

@article{FadellNeu,
 ISSN = {00255521, 19031807},
 URL = {http://www.jstor.org/stable/24489273},
 author = {E. Fadell and L. Neuwirth},
 journal = {Mathematica Scandinavica},
 pages = {111--118},
 publisher = {Mathematica Scandinavica},
 title = {CONFIGURATION SPACES},
 urldate = {2025-05-20},
 volume = {10},
 year = {1962}
}

@article{richards1963,
  title={On the classification of noncompact surfaces},
  author={Richards, I.},
  journal={Transactions of the American Mathematical Society},
  volume={106},
  number={2},
  pages={259--269},
  year={1963},
  publisher={JSTOR}
}

@book{kassel2008,
  title={Braid groups},
  author={Kassel, C. and Turaev, V.},
  volume={247},
  year={2008},
  publisher={Springer Science \& Business Media}
}

@article{YU,
 ISSN = {0003486X},
 URL = {http://www.jstor.org/stable/121011},
 author = {G. Yu},
 journal = {Annals of Mathematics},
 number = {2},
 pages = {325--355},
 publisher = {Annals of Mathematics},
 title = {The Novikov Conjecture for Groups with Finite Asymptotic Dimension},
 urldate = {2024-10-18},
 volume = {147},
 year = {1998}
}

@article {MR3606454,
    AUTHOR = {Guentner, E. and Willett, R. and Yu, G.},
     TITLE = {Dynamic asymptotic dimension: relation to dynamics, topology,
              coarse geometry, and {$C^*$}-algebras},
   JOURNAL = {Math. Ann.},
  FJOURNAL = {Mathematische Annalen},
    VOLUME = {367},
      YEAR = {2017},
    NUMBER = {1-2},
     PAGES = {785--829},
      ISSN = {0025-5831,1432-1807},
   MRCLASS = {54F45 (22A22 22F05 37A55 37B05 46L05 54H20)},
  MRNUMBER = {3606454},
MRREVIEWER = {Thomas\ Weighill},
       DOI = {10.1007/s00208-016-1395-0},
       URL = {https://doi.org/10.1007/s00208-016-1395-0},
}

@article{jimenezleon,
  author  = {R. Jim\'{e}nez Rolland and P. L. Le\'{o}n \'{A}lvarez},
  title   = {On the Virtually Cyclic Dimension of Normally Poly-Free Groups},
  journal = {Bolet\'{i}n de la Sociedad Matem\'{a}tica Mexicana},
  volume  = {31},
  number  = {5},
  year    = {2025},
  doi     = {10.1007/s40590-024-00670-z},
  url     = {https://doi.org/10.1007/s40590-024-00670-z}
}

@book {MR600654,
    AUTHOR = {Hungerford, T. W.},
     TITLE = {Algebra},
    SERIES = {Graduate Texts in Mathematics},
    VOLUME = {73},
      NOTE = {Reprint of the 1974 original},
 PUBLISHER = {Springer-Verlag, New York-Berlin},
      YEAR = {1980},
     PAGES = {xxiii+502},
      ISBN = {0-387-90518-9},
   MRCLASS = {00A05 (15-01 16-01)},
  MRNUMBER = {600654},
}

@article{WINTER2010461,
  title = {The nuclear dimension of $C^*$-algebras},
  journal = {Advances in Mathematics},
  volume = {224},
  number = {2},
  pages = {461-498},
  year = {2010},
  issn = {0001-8708},
  doi = {https://doi.org/10.1016/j.aim.2009.12.005},
  url = {https://www.sciencedirect.com/science/article/pii/S0001870809003740},
  author = {W. Winter and J. Zacharias},
  keywords = {Noncommutative covering dimension, Classification of $C^*$-algebras, Approximation properties, Asymptotic dimension}
}

@article{Haydee,
  author = {Contreras Peruyero, H. and Suárez-Serrato, P.},
  title = {Asymptotic dimension and geometric decompositions in dimensions 3 and 4},
  journal = {Journal of the Australian Mathematical Society},
  pages = {1--26},
  year = {2025},
  doi = {10.1017/S1446788725000072},
  note = {Published online}
}

@article{Bestvina,
     author = {Bestvina, M. and Bromberg, K. and Fujiwara, K.},
     title = {Constructing group actions on quasi-trees and applications to mapping class groups},
     journal = {Publications Math\'ematiques de l'IH\'ES},
     pages = {1--64},
     publisher = {Springer Berlin Heidelberg},
     address = {Berlin/Heidelberg},
     volume = {122},
     year = {2015},
     doi = {10.1007/s10240-014-0067-4},
     language = {en},
     url = {http://www.numdam.org/articles/10.1007/s10240-014-0067-4/}
}

@article{Eckmann1973,
  author = {Eckmann, B. and Bieri, R.},
  journal = {Inventiones mathematicae},
  pages = {103-124},
  title = {Groups with Homological Duality Generalizing Poincaré Duality.},
  url = {http://eudml.org/doc/142208},
  volume = {20},
  year = {1973},
}

@article{brown1975homological,
  title={Homological criteria for finiteness},
  author={Brown, K. S},
  journal={Comment. Math. Helv},
  volume={50},
  number={1},
  pages={129--135},
  year={1975}
}

@article{Dra:Alex,
  title={Cohomological approach to asymptotic dimension},
  author={Dranishnikov, A.},
  journal={Geometriae Dedicata},
  volume={141},
  pages={59--86},
  year={2009},
  publisher={Springer}
}

@article{Arnold2014,
  title={On some topological invariants of algebraic functions},
  author={Arnold, V.},
  journal={Trans. Moscow Math. Soc.},
  volume={21},
  pages={33--52},
  year={1970}
}

@article{Paris:Rolfsen,
     author = {Paris, L. and Rolfsen, D.},
     title = {Geometric subgroups of surface braid groups},
     journal = {Annales de l'Institut Fourier},
     pages = {417--472},
     publisher = {Association des Annales de l{\textquoteright}institut Fourier},
     volume = {49},
     number = {2},
     year = {1999},
     doi = {10.5802/aif.1680},
     zbl = {0962.20028},
     mrnumber = {2000f:20059},
     language = {en},
     url = {https://aif.centre-mersenne.org/articles/10.5802/aif.1680/}
}

@article {MR1797585,
    AUTHOR = {Aravinda, C. S. and Farrell, F. T. and Roushon, S. K.},
     TITLE = {Algebraic {$K$}-theory of pure braid groups},
   JOURNAL = {Asian J. Math.},
  FJOURNAL = {Asian Journal of Mathematics},
    VOLUME = {4},
      YEAR = {2000},
    NUMBER = {2},
     PAGES = {337--343},
      ISSN = {1093-6106},
   MRCLASS = {19B28 (19A31 20F36 20H15)},
  MRNUMBER = {1797585},
MRREVIEWER = {Ross Staffeldt},
       DOI = {10.4310/AJM.2000.v4.n2.a4},
       URL = {https://doi-org.pbidi.unam.mx:2443/10.4310/AJM.2000.v4.n2.a4},
}

@book {FM12,
    AUTHOR = {Farb, B. and Margalit, D.},
     TITLE = {A primer on mapping class groups},
    SERIES = {Princeton Mathematical Series},
    VOLUME = {49},
 PUBLISHER = {Princeton University Press, Princeton, NJ},
      YEAR = {2012},
     PAGES = {xiv+472},
      ISBN = {978-0-691-14794-9},
   MRCLASS = {57M50 (20F36 20F65 57M07 57N05)},
  MRNUMBER = {2850125},
MRREVIEWER = {Stephen P. Humphries},
}

@article {MR596323,
    AUTHOR = {Meier, D.},
     TITLE = {On the homological dimension of poly-locally free groups},
   JOURNAL = {J. London Math. Soc. (2)},
  FJOURNAL = {Journal of the London Mathematical Society. Second Series},
    VOLUME = {22},
      YEAR = {1980},
    NUMBER = {3},
     PAGES = {449--459},
      ISSN = {0024-6107},
   MRCLASS = {20J05},
  MRNUMBER = {596323},
MRREVIEWER = {U. Stammbach},
       DOI = {10.1112/jlms/s2-22.3.449},
       URL = {https://doi-org.pbidi.unam.mx:2443/10.1112/jlms/s2-22.3.449},
}

@book {Br94,
    AUTHOR = {Brown, K. S.},
     TITLE = {Cohomology of groups},
    SERIES = {Graduate Texts in Mathematics},
    VOLUME = {87},
      NOTE = {Corrected reprint of the 1982 original},
 PUBLISHER = {Springer-Verlag, New York},
      YEAR = {1994},
     PAGES = {x+306},
      ISBN = {0-387-90688-6},
   MRCLASS = {20J05 (20-02)},
  MRNUMBER = {1324339},
}

@article {MR4665278,
    AUTHOR = {Tselekidis, P.},
     TITLE = {Asymptotic dimension of graphs of groups and one-relator
              groups},
   JOURNAL = {Algebr. Geom. Topol.},
  FJOURNAL = {Algebraic \& Geometric Topology},
    VOLUME = {23},
      YEAR = {2023},
    NUMBER = {8},
     PAGES = {3587--3613},
      ISSN = {1472-2747,1472-2739},
   MRCLASS = {20E08 (20E06 20F65 20F69)},
  MRNUMBER = {4665278},
MRREVIEWER = {Sam\ Shepherd},
       DOI = {10.2140/agt.2023.23.3587},
       URL = {https://doi.org/10.2140/agt.2023.23.3587},
}

@incollection {Gr93,
    AUTHOR = {Gromov, M.},
     TITLE = {Asymptotic invariants of infinite groups},
 BOOKTITLE = {Geometric group theory, {V}ol.\ 2 ({S}ussex, 1991)},
    SERIES = {London Math. Soc. Lecture Note Ser.},
    VOLUME = {182},
     PAGES = {1--295},
 PUBLISHER = {Cambridge Univ. Press, Cambridge},
      YEAR = {1993},
      ISBN = {0-521-44680-5},
   MRCLASS = {20F32 (57M07)},
  MRNUMBER = {1253544},
}

@InCollection{Lu05,
  Title                    = {Survey on classifying spaces for families of subgroups},
  Author                   = {L{\"u}ck, W.},
  Booktitle                = {Infinite groups: geometric, combinatorial and dynamical aspects},
  Publisher                = {Birkh\"auser, Basel},
  Year                     = {2005},
  Pages                    = {269--322},
  Series                   = {Progr. Math.},
  Volume                   = {248}
}

@book {MR0375281,
    AUTHOR = {Birman, Joan S.},
     TITLE = {Braids, links, and mapping class groups},
    SERIES = {Annals of Mathematics Studies, No. 82},
 PUBLISHER = {Princeton University Press, Princeton, N.J.; University of
              Tokyo Press, Tokyo},
      YEAR = {1974},
     PAGES = {ix+228},
   MRCLASS = {55A25},
  MRNUMBER = {0375281},
MRREVIEWER = {Wilbur Whitten},
}

@article {MR3869010,
    AUTHOR = {Lima Gon\c{c}alves, D. and Guaschi, J. and Maldonado,
              M.},
     TITLE = {Embeddings and the (virtual) cohomological dimension of the
              braid and mapping class groups of surfaces},
   JOURNAL = {Confluentes Math.},
  FJOURNAL = {Confluentes Mathematici},
    VOLUME = {10},
      YEAR = {2018},
    NUMBER = {1},
     PAGES = {41--61},
   MRCLASS = {57N05 (20F36 20F38 20J06 55P20 55R80 57M07)},
  MRNUMBER = {3869010},
MRREVIEWER = {B\l a\.{z}ej Szepietowski},
       DOI = {10.5802/cml.45},
       URL = {https://doi-org.pbidi.unam.mx:2443/10.5802/cml.45},
}

@article{I87,
	doi = {10.1070/rm1987v042n03abeh001440},
	url = {https://doi.org/10.1070%2Frm1987v042n03abeh001440},
	year = 1987,
	month = {jun},
	publisher = {{IOP} Publishing},
	volume = {42},
	number = {3},
	pages = {55--107},
	author = {N V Ivanov},
	title = {Complexes of curves and the Teichmüller modular group},
	journal = {Russian Mathematical Surveys}
	}

@article{maldonado:Guaschi:1,
  title={Inclusion of configuration spaces in Cartesian products, and the virtual cohomological dimension of the braid groups of $\mathbb{S}^2$ and $\mathbb{R}\mathbb{P}^2$},
  author={Gon{\c{c}}alves, D. and Guaschi, J.},
  journal={Pacific Journal of Mathematics},
  volume={287},
  number={1},
  pages={71--99},
  year={2017},
  publisher={Mathematical Sciences Publishers}
}

@article {A2006,
    AUTHOR = {Dranishnikov, A. and Smith, J.},
     TITLE = {Asymptotic dimension of discrete groups},
   JOURNAL = {Fund. Math.},
  FJOURNAL = {Fundamenta Mathematicae},
    VOLUME = {189},
      YEAR = {2006},
    NUMBER = {1},
     PAGES = {27--34},
      ISSN = {0016-2736,1730-6329},
   MRCLASS = {20F69},
  MRNUMBER = {2213160},
       DOI = {10.4064/fm189-1-2},
       URL = {https://doi.org/10.4064/fm189-1-2},
}

@incollection {MR3382024,
    AUTHOR = {Guaschi, J. and Juan-Pineda, D.},
     TITLE = {A survey of surface braid groups and the lower algebraic
              {$K$}-theory of their group rings},
 BOOKTITLE = {Handbook of group actions. {V}ol. {II}},
    SERIES = {Adv. Lect. Math. (ALM)},
    VOLUME = {32},
     PAGES = {23--75},
 PUBLISHER = {Int. Press, Somerville, MA},
      YEAR = {2015},
   MRCLASS = {20F36 (19A31 19B28)},
  MRNUMBER = {3382024},
MRREVIEWER = {Inasa Nakamura},
}

@article{PetersenWade2024,
  author    = {Dan Petersen and Richard D. Wade},
  title     = {The handlebody group is a virtual duality group},
  journal   = {arXiv preprint arXiv:2405.15515},
  year      = {2024},
  url       = {https://arxiv.org/abs/2405.15515}
}

@article{BorelSerre1973,
  author = {Borel, Armand and Serre, Jean-Pierre},
  title = {Corners and Arithmetic Groups},
  journal = {Commentarii Mathematici Helvetici},
  volume = {48},
  pages = {436--491},
  year = {1973},
  doi = {10.1007/BF02567589}
}
\end{document}